\newtheorem{thm}{Theorem}[section]
\newtheorem{cor}[thm]{Corollary}
\newtheorem{prop}[thm]{Proposition}
\theoremstyle{definition}
\theoremstyle{remark}
\newcommand{\id}{\textrm{id}}
\begin{document}
%%%%%%%%%%%%%%%%%%%%%%%%%%%%%%%%%
\title[On a Riemannian manifold with a circulant structure]{On a Riemannian manifold with a circulant structure whose third power is the identity}
\bigskip
\author{Iva Dokuzova}

\date{}
%\date{21. 10. 2010}%
%\dedicatory{math}%
%\commby{}%
% ----------------------------------------------------------------
\address{Department of
Algebra and Geometry\\ Faculty of Mathematics and Informatics \\ University of Plovdiv Paisii Hilendarski\\ 24 Tzar Asen, 4000 Plovdiv, Bulgaria}
\email{dokuzova@uni-plovdiv.bg}

%%% IF THERE ARE MORE THAN TWO AUTHORS WRITE
%%% \newcommand{\AuthorNames}{First Author et al.}
%%%
%%% ENTER MSC, KEYWORDS, RECEIVED, EDITOR, THANKS FOR FINANCIAL SUPPORT FOR RESEARCH
\subjclass[2010]{Primary 53C15; Secondary 53B20, 22E60}
\keywords{Riemannian manifold, curvature tensor, circulant matrix, Lie group}
\thanks{This work is partially supported by project FP17-FMI-008 of the Scientific Research Fund, Paisii Hilendarski University of Plovdiv, Bulgaria.}
\begin{abstract}
%%%%%%%%%%%%%%%%%%%%%%%%%%%%%%%%%%%

It is studied a $3$-dimensional Riemannian manifold equipped with a tensor structure of type $(1,1)$, whose third power is the identity.
This structure has a circulant matrix with respect to some basis, i.e. the structure is circulant. On such a manifold a fundamental tensor by the metric and by the covariant derivative of the circulant structure is defined. An important characteristic identity for this tensor is obtained.
It is established that the image of the fundamental tensor with respect to the usual conformal transformation satisfies the same identity.
A Lie group as a manifold of the considered type is constructed and some of its geometrical characteristics are found.
\end{abstract}

\maketitle
% ----------------------------------------------------------------
\section{Introduction}
In differential geometry of the Riemannian manifolds with additional structures,  the covariant derivative of the corresponding structure plays an important role. In this connection, for example, the classifications in \cite{GH}, \cite{8} and \cite{S-G} are made. One of the basic classes in these classifications is the largest class, which is invariant under the conformal transformations of the Riemannian metric.

 In \cite{2}, \cite{4} and \cite{AE}, problems of differential geometry of a $3$-dimensional Riemannian manifold $(M, g, Q)$ with a tensor structure $Q$, whose matrix in some basis is circulant, are considered. This structure satisfies $Q^{3}=\id$, $Q\neq\id$, and it is compatible with the metric $g$, so that an isometry is induced in any tangent space on $(M, g, Q)$.

 In the present work we continue studying such a manifold $(M, g, Q)$. In Section~\ref{2}, we give some necessary facts about $(M, g, Q)$. In Section~\ref{3}, we define the fundamental tensor $F$ by the metric $g$ and by the covariant derivative of $Q$. We obtain the important characteristic identity \eqref{c1} for $F$. We establish that the image of the fundamental tensor with respect to the usual conformal transformation satisfies the same identity, i.e. the conformal manifold $(M, \overline{g}, Q)$ belongs to the same class. In Section~\ref{4}, we get some curvature properties of $(M, g, Q)$. In Section~\ref{5},
 we construct a manifold $(G, g, Q)$ of the type of $(M, g, Q)$, where $G$ is a Lie group. In Section~\ref{6}, we consider a subgroup $G'$ of $G$, for which the manifold $(G', g, Q)$ is of invariant sectional curvatures under $Q$. We study some particular cases.

\section{Preliminaries}\label{2}
\thispagestyle{empty}
We consider the manifold $(M, g, Q)$, introduced in \cite{2}, i.e. $M$ is a $3$-dimensional Riemannian manifold equipped with an additional tensor structure $Q$ of type $(1,1)$, which satisfies
   \begin{equation}\label{Q3}
    Q^{3}=\id,\ Q\neq\id,
\end{equation}
and $Q$ has a circulant matrix with respect to some basis, as follows:
\begin{equation}\label{Q}
    (Q_{j}^{s})=\begin{pmatrix}
      0 & 1 & 0 \\
      0 & 0 & 1 \\
      1 & 0 & 0 \\
    \end{pmatrix}.
\end{equation}
The metric $g$ and the structure $Q$ satisfy
\begin{equation}\label{g3}
 g(Qx, Qy)=g(x,y),\quad x, y\in \mathfrak{X}(M).
\end{equation}
Necessary, the matrix of $g$ with respect to the same basis has the form
\begin{equation}\label{metricg}
    (g_{ij})=\begin{pmatrix}
      A & B & B \\
      B & A & B \\
      B & B & A \\
    \end{pmatrix},
\end{equation}
where $A$ and $B$ are smooth functions of an arbitrary point $p(x^{1}, x^{2},
x^{3})$ on $M$.
It is supposed that $A>B>0$ in order $g$ to be positive definite.

Moreover, in \cite{2} it is defined another metric $\tilde{g}$ associated to $g$ by $Q$, as follows:
\begin{equation}\label{metricf}
 \tilde{g}(x,y)=g(x, Qy)+g(Qx, y).\end{equation}
Here and anywhere in this work, $x, y, z, u$ will stand for arbitrary elements of the algebra on the smooth vector fields on $M$ or vectors in the tangent space $T_{p}M$, $p\in M$. The Einstein summation convention is used, the range of the summation indices being always $\{1, 2, 3\}$.

\section{The fundamental tensor $F$ on $(M, g, Q)$}\label{3}

Let $\nabla$ be the Levi-Civita connection of $g$ and $\{e_{i}\}$ be an arbitrary basis of $T_{p}M$. We consider the tensor $F$ of type $(0,3)$ and the Lee forms $\theta$ and $\theta^{*}$, defined by
\begin{equation}\label{F}
  F(x,y,z)=(\nabla_{x}\tilde{g})(y, z),\quad \theta(x)=g^{ij}F(e_{i},e_{j},x),\quad
  \theta^{*}(x)=g^{ij}F(e_{i},Qe_{j},x).
\end{equation}
Obviously, the tensor $F$ has the property
\begin{equation}\label{prop-F}
  F(x,z,y)=F(x,y,z).
\end{equation}

\begin{thm}
For the tensor $F$ it is valid the identity
\begin{equation}\label{c1}
 F(x,y,z)=\frac{1}{3}\big\{g(x,y)\theta(z)+g(x,z)\theta(y)+\tilde{g}(x,y)\theta^{*}(z)+\tilde{g}(x,z)\theta^{*}(y)\big\}.
\end{equation}
\end{thm}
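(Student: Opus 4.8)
The plan is to push everything onto the distinguished unit vector field fixed by $Q$: once $F$, $\theta$ and $\theta^{*}$ are written explicitly in terms of $\nabla\xi$, the identity \eqref{c1} will follow from a single ``umbilicity'' fact about the foliation to which $\xi$ is normal.

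First I would introduce the local unit vector field $\xi=(3(A+2B))^{-1/2}(e_{1}+e_{2}+e_{3})$ in the basis of \eqref{Q}, for which $Q\xi=\xi$ and $g(\xi,\xi)=1$. Writing $\xi^{\flat}=g(\,\cdot\,,\xi)$, a direct computation with the matrices \eqref{Q} and \eqref{metricg} gives $g+\tilde g=(A+2B)\mathbf 1$, where $\mathbf 1$ is the $(0,2)$-tensor identically equal to $1$ in that basis; since $\xi^{\flat}\otimes\xi^{\flat}=\tfrac13(A+2B)\mathbf 1$, this says
\[
\tilde g=3\,\xi^{\flat}\otimes\xi^{\flat}-g .
\]
Differentiating and using $\nabla g=0$ and $(\nabla_{x}\xi^{\flat})(y)=g(\nabla_{x}\xi,y)$ yields
\[
F(x,y,z)=3\big[\,g(\nabla_{x}\xi,y)\,\xi^{\flat}(z)+\xi^{\flat}(y)\,g(\nabla_{x}\xi,z)\,\big],
\]
and, as $g(\xi,\xi)=1$, one has $g(\nabla_{x}\xi,\xi)=0$, i.e.\ $\nabla_{x}\xi\in\xi^{\perp}$ for every $x$.

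The key step is that $\xi^{\perp}$ is totally umbilic: there is a smooth function $\rho$ with $\nabla_{x}\xi=\rho\big(x-\xi^{\flat}(x)\,\xi\big)+\xi^{\flat}(x)\,\nabla_{\xi}\xi$ for all $x$. Since $\nabla\xi$ already takes values in $\xi^{\perp}$ and $\nabla_{\xi}\xi\in\xi^{\perp}$, this reduces to $g(\nabla_{x}\xi,y)=\rho\,g(x,y)$ for $x,y\in\xi^{\perp}$. In the coordinates of \eqref{Q} a vector lies in $\xi^{\perp}$ exactly when the sum of its components vanishes, and on such vectors $g(x,y)=(A-B)\sum_{i}x^{i}y^{i}$. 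Moreover $\xi^{\flat}$ is a conformal multiple of the exact form $d\sigma$, $\sigma:=x^{1}+x^{2}+x^{3}$, so $\xi$ is the unit normal to the level sets of $\sigma$ and, for $x,y\in\xi^{\perp}$, $g(\nabla_{x}\xi,y)$ equals the Hessian $\nabla^{2}\sigma(x,y)$ up to the positive factor $|\grad\sigma|^{-1}$; computing the Christoffel symbols of \eqref{metricg} and using $\sum_{l}g^{kl}=(A+2B)^{-1}$ shows that $\nabla^{2}\sigma$ restricted to $\xi^{\perp}$ is again a multiple of $\sum_{i}x^{i}y^{i}$, hence of $g$. I expect this umbilicity lemma to be the main obstacle; it is the one place where the circulant form of $g$ is genuinely used, and it is exactly what forces the several curvature-type terms of \eqref{c1} to be governed by the single function $\rho$ and the single $1$-form $(\nabla_{\xi}\xi)^{\flat}$.

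Finally I would compute the Lee forms from \eqref{F}. Inserting the formula for $F$ and using $Q\xi=\xi$, $g^{ij}\xi^{\flat}(e_{j})e_{i}=\xi$, and the traces $\operatorname{tr}\nabla\xi=2\rho$, $\operatorname{tr}(Q^{2}\nabla\xi)=-\rho$ --- both immediate from umbilicity, since $\nabla\xi$ equals $\rho\,\id$ on $\xi^{\perp}$ and has vanishing $\xi$-component, while $\operatorname{tr}Q=\operatorname{tr}Q^{2}=0$ with $Q\xi=\xi$ give $\operatorname{tr}(Q^{2}|_{\xi^{\perp}})=-1$ --- one obtains
\[
\theta=6\rho\,\xi^{\flat}+3(\nabla_{\xi}\xi)^{\flat},\qquad \theta^{*}=-3\rho\,\xi^{\flat}+3(\nabla_{\xi}\xi)^{\flat}.
\]
Substituting $g(\nabla_{x}\xi,y)=\rho\big(g(x,y)-\xi^{\flat}(x)\xi^{\flat}(y)\big)+\xi^{\flat}(x)(\nabla_{\xi}\xi)^{\flat}(y)$ into $F$, and these $\theta,\theta^{*}$ together with $\tilde g=3\xi^{\flat}\otimes\xi^{\flat}-g$ into the right-hand side of \eqref{c1}, both sides reduce to the same linear combination of the tensors $g(x,y)\xi^{\flat}(z)+g(x,z)\xi^{\flat}(y)$, $\xi^{\flat}(x)\xi^{\flat}(y)\xi^{\flat}(z)$ and $\xi^{\flat}(x)\big[(\nabla_{\xi}\xi)^{\flat}(y)\xi^{\flat}(z)+(\nabla_{\xi}\xi)^{\flat}(z)\xi^{\flat}(y)\big]$, which establishes \eqref{c1}. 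This last verification is purely mechanical.
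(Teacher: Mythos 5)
Your argument is correct, and it takes a genuinely different route from the paper's proof, which is a direct componentwise verification: the paper computes all the components $F_{ijk}$ in \eqref{nablaf} from the Christoffel symbols \eqref{gamma}, then the components \eqref{alfa} and \eqref{tild-alfa} of $\theta$ and $\theta^{*}$, and checks \eqref{w1} entry by entry. You instead isolate the two structural facts that make \eqref{c1} hold: the algebraic identity $\tilde g=3\,\xi^{\flat}\otimes\xi^{\flat}-g$ for the $Q$-invariant unit field $\xi$ (which is correct, since $g+\tilde g$ has all entries equal to $A+2B$ and $\xi^{\flat}(e_{i})=\sqrt{(A+2B)/3}$ for each $i$), and the total umbilicity of $\xi^{\perp}$, i.e.\ of the level sets of $\sigma=x^{1}+x^{2}+x^{3}$. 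Given the first fact, \eqref{c1} is in effect \emph{equivalent} to the second, so you have located the real content of the theorem. The one step you leave as an asserted computation does check out: using \eqref{gamma} one finds that the Hessian $\nabla^{2}\sigma(e_{i},e_{j})=-\sum_{k}\Gamma^{k}_{ij}$, restricted to the plane $\sum_{i}x^{i}=0$, equals $\frac{1}{2(A+2B)}\bigl(\sum_{i}A_{i}-\sum_{i}B_{i}\bigr)\delta_{ij}$ there, which is the required multiple of $g|_{\xi^{\perp}}=(A-B)\,\delta|_{\xi^{\perp}}$; this yields $\rho=\bigl(\sum_{i}A_{i}-\sum_{i}B_{i}\bigr)\big/\bigl(2(A-B)\sqrt{3(A+2B)}\bigr)$, and with it your expressions $\theta=6\rho\,\xi^{\flat}+3(\nabla_{\xi}\xi)^{\flat}$ and $\theta^{*}=-3\rho\,\xi^{\flat}+3(\nabla_{\xi}\xi)^{\flat}$ agree with \eqref{alfa} and \eqref{tild-alfa} (for instance $\theta-\theta^{*}=9\rho\,\xi^{\flat}$ reproduces $\theta_{i}-\theta^{*}_{i}=\frac{3}{2(A-B)}\bigl(\sum_{k}A_{k}-\sum_{k}B_{k}\bigr)$). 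The trade-off is clear: the paper's computation is shorter to write down but opaque, whereas your argument explains \emph{why} a single identity of the shape \eqref{c1} must hold --- the whole tensor $F$ is controlled by the shape operator of the foliation $\sigma=\mathrm{const}$ together with the one-form $(\nabla_{\xi}\xi)^{\flat}$ --- at the cost of having to prove the umbilicity lemma, which is itself a Christoffel-symbol computation of comparable length. To make your proposal a complete proof you would only need to write out that one computation explicitly.
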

\begin{proof}
The components of the geometric quantities of $(M, g, Q)$, given in the formulas  \eqref{g-obr}, \eqref{gamma} \eqref{f2.1} and \eqref{f-obr}, are obtained in \cite{2} and \cite{4}.
The inverse matrix of $(g_{ij})$ is
\begin{equation}\label{g-obr}
    (g^{ij})=\frac{1}{D}\begin{pmatrix}
      A+B & -B & -B \\
      -B & A+B & -B \\
      -B & -B & A+B \\
    \end{pmatrix},
\end{equation}
where $D=(A-B)(A+2B)$.
The Christoffel symbols of $g$ are as follows:
\begin{equation}\label{gamma}
\begin{split}
\Gamma_{ij}^{k}&=\frac{1}{2D}\big((A+B)(-B_{k}+B_{i}+B_{j})-B(A_{i}+A_{j})\big),\\
\Gamma_{ii}^{i}&=\frac{1}{2D}\big((A+B)A_{i}-B(4B_{i}-A_{j}-A_{k})\big),\\
\Gamma_{ii}^{k}&=\frac{1}{2D}\big((A+B)(2B_{i}-A_{k})-B(2B_{i}-A_{j}+A_{i})\big),\\
\Gamma_{ij}^{i}&=\frac{1}{2D}\big((A+B)A_{j}-B(-B_{k}+B_{i}+B_{j}+A_{i})\big),\\
\end{split}
\end{equation}
where $i\neq j$, $j\neq k$, $i\neq k$ and
  $A_{i}=\frac{\partial A}{\partial x^{i}},$ $B_{i}=\frac{\partial B}{\partial x^{i}}$.
The  matrix  of the associated metric $\tilde{g}$, determined by \eqref{metricf}, is
of the type:
\begin{equation}\label{f2.1}
(\tilde{g}_{ij})=\begin{pmatrix}
      2B & A+B & A+B \\
      A+B & 2B & A+B \\
      A+B & A+B & 2B \\
    \end{pmatrix}.
\end{equation}
The inverse matrix of $(\tilde{g}_{ij})$ has the form
\begin{equation}\label{f-obr}
    (\tilde{g}^{ij})=\frac{1}{2D}\begin{pmatrix}
      -A-3B & A+B & A+B \\
      A+B & -A-3B & A+B \\
      A+B & A+B & -A-3B \\
    \end{pmatrix}.
\end{equation}

Now, we calculate the components of $F$, $\theta$ and $\theta^{*}$, defined by
\eqref{F}.

Using the well-known identities for a Riemannian metric:
\begin{equation}\label{defF}
\nabla_{k}\tilde{g}_{ij}=\partial_{k}\tilde{g}_{ij}-\Gamma_{ki}^{s}\tilde{g}_{sj}-\Gamma_{kj}^{s}\tilde{g}_{si}\ ,
\end{equation}
and due to \eqref{prop-F}, \eqref{f2.1} and \eqref{gamma}, we find the following components $F_{ijk}=F(e_{i}, e_{j}, e_{k})$ of $F$:
\begin{align}\label{nablaf}\nonumber
&F_{111}=-2B_{1}+A_{2}+A_{3},\quad
F_{211}=F_{311}=-B_{1}+B_{2}+B_{3}-A_{1},\\\nonumber
&F_{112}=F_{221}=\frac{1}{2}(A_{3}-B_{1}-B_{2}+B_{3}),\quad
F_{312}=B_{3}-\frac{1}{2}(A_{1}+A_{2}),\\\nonumber
&F_{222}=-2B_{2}+A_{1}+A_{3},\quad
F_{322}=F_{122}=B_{1}-B_{2}+B_{3}-A_{2},\\
&F_{113}=F_{331}=\frac{1}{2}(A_{2}-B_{1}+B_{2}-B_{3}),\quad
F_{213}=B_{2}-\frac{1}{2}(A_{1}+A_{3}),\\\nonumber
&F_{123}=B_{1}-\frac{1}{2}(A_{2}+A_{3}),\quad
F_{223}=F_{332}=\frac{1}{2}(A_{1}+B_{1}-B_{2}-B_{3}),\\\nonumber & F_{133}=F_{233}=B_{1}+B_{2}-B_{3}-A_{3},\quad
F_{333}=-2B_{3}+A_{1}+A_{2}.\\\nonumber
\end{align}

For the components of $\theta$ and $\theta^{*},$ from \eqref{Q}, \eqref{F}, \eqref{g-obr} and \eqref{nablaf}, we have
\begin{align}\label{alfa}\nonumber
\theta_{1}&=\frac{3}{2D}\big((A+B)(A_{2}+A_{3})+2B(A_{1}-B_{2}-B_{3})-2AB_{1}\big),\\
\theta_{2}&=\frac{3}{2D}\big((A+B)(A_{1}+A_{3})+2B(A_{2}-B_{1}-B_{3})-2AB_{2}\big),\\\nonumber
\theta_{3}&=\frac{3}{2D}\big((A+B)(A_{1}+A_{2})+2B(A_{3}-B_{1}-B_{2})-2AB_{3}\big),\\\nonumber
\end{align}
\begin{align}\label{tild-alfa}\nonumber
\theta^{*}_{1}&=-\frac{3}{2D}\big((A-2B)B_{1}-A(B_{2}+B_{3}-A_{1})+B(A_{2}+A_{3})\big),\\
\theta^{*}_{2}&=-\frac{3}{2D}\big((A-2B)B_{2}-A(B_{1}+B_{3}-A_{2})+B(A_{1}+A_{3})\big),\\\nonumber
\theta^{*}_{3}&=-\frac{3}{2D}\big((A-2B)B_{3}-A(B_{1}+B_{2}-A_{3})+B(A_{1}+A_{2})\big).\\\nonumber
\end{align}
From \eqref{metricg}, \eqref{f2.1}, \eqref{nablaf}, \eqref{alfa} and \eqref{tild-alfa} it follows
 \begin{equation}\label{w1}
 \begin{split}
  F_{kij}=\frac{1}{3}\big(g_{kj}\theta_{i}+g_{ki}\theta_{j}+\tilde{g}_{kj}\theta^{*}_{i}+\tilde{g}_{ki}\theta^{*}_{j}\big),\\
  \end{split}
\end{equation}
which is equivalent to \eqref{c1}.
\end{proof}

\begin{thm}\label{connF-lineF}
Under the conformal transformation
 \begin{equation}\label{conf}\overline{g}(x, y) = \alpha g(x, y),\end{equation}
where $\alpha$ is a smooth positive function, the tensor $F$ is transformed into the tensor
\begin{equation}\label{overlineF}
 \overline{F}(x,y,z)=\frac{1}{3}\big\{\overline{g}(x,y)\overline{\theta}(z)+\overline{g}(x,z)\overline{\theta}(y)+\overline{\widetilde{g}}(x,y)\overline{\theta}^{*}(z)+\overline{\widetilde{g}}(x,z)\overline{\theta}^{*}(y)\big\}
\end{equation}
with  $\overline{\theta}=\theta+\dfrac{3}{2\alpha}\mathrm{d}\alpha\circ (Q+ Q^{2})$ and $\overline{\theta}^{*}=\theta^{*}-\dfrac{3}{2\alpha}\mathrm{d}\alpha.$
\end{thm}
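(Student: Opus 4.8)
The plan is to compute $\overline{F}$ straight from its definition $\overline{F}(x,y,z)=(\overline{\nabla}_{x}\overline{\widetilde{g}})(y,z)$ and to express everything through the quantities $g$, $\widetilde{g}$, $F$, $Q$ and $\mathrm{d}\alpha$ that are already understood on $(M,g,Q)$. The first observation is that the associated metric rescales exactly like $g$: since $\widetilde{g}(x,y)=g(x,Qy)+g(Qx,y)$ by \eqref{metricf} and $Q$ is unaffected by \eqref{conf}, we have $\overline{\widetilde{g}}=\alpha\widetilde{g}$.

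Next I would invoke the classical conformal change formula for the Levi-Civita connection,
\[
\overline{\nabla}_{x}y=\nabla_{x}y+\frac{1}{2\alpha}\big(\mathrm{d}\alpha(x)\,y+\mathrm{d}\alpha(y)\,x-g(x,y)\,\grad\alpha\big),
\]
where $\grad\alpha$ denotes the $g$-gradient of $\alpha$. Inserting this together with $\overline{\widetilde{g}}=\alpha\widetilde{g}$ into $(\overline{\nabla}_{x}\overline{\widetilde{g}})(y,z)=x\big(\overline{\widetilde{g}}(y,z)\big)-\overline{\widetilde{g}}(\overline{\nabla}_{x}y,z)-\overline{\widetilde{g}}(y,\overline{\nabla}_{x}z)$ and recognising $\alpha\,(\nabla_{x}\widetilde{g})(y,z)=\alpha F(x,y,z)$, the three terms proportional to $\mathrm{d}\alpha(x)\,\widetilde{g}(y,z)$ cancel, leaving
\[
\overline{F}(x,y,z)=\alpha F(x,y,z)+\tfrac{1}{2}\big(g(x,y)\,\widetilde{g}(\grad\alpha,z)+g(x,z)\,\widetilde{g}(\grad\alpha,y)-\widetilde{g}(x,z)\,\mathrm{d}\alpha(y)-\widetilde{g}(x,y)\,\mathrm{d}\alpha(z)\big).
\]
The one genuinely structure-dependent step is the evaluation of $\widetilde{g}(\grad\alpha,\cdot)$: from \eqref{g3} one has $g(Qx,y)=g(x,Q^{2}y)$ because $Q^{-1}=Q^{2}$ by \eqref{Q3}, hence $\widetilde{g}(\grad\alpha,z)=g(\grad\alpha,Qz)+g(Q\,\grad\alpha,z)=\mathrm{d}\alpha(Qz)+\mathrm{d}\alpha(Q^{2}z)=\big(\mathrm{d}\alpha\circ(Q+Q^{2})\big)(z)$.

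Finally I would substitute the characteristic identity \eqref{c1} for the term $\alpha F(x,y,z)$ and use $\overline{g}=\alpha g$, $\overline{\widetilde{g}}=\alpha\widetilde{g}$ to move the factor $\alpha$ inside; grouping the coefficients of $\overline{g}(x,y)$, $\overline{g}(x,z)$, $\overline{\widetilde{g}}(x,y)$ and $\overline{\widetilde{g}}(x,z)$ reproduces \eqref{overlineF} verbatim, with $\overline{\theta}=\theta+\frac{3}{2\alpha}\mathrm{d}\alpha\circ(Q+Q^{2})$ and $\overline{\theta}^{*}=\theta^{*}-\frac{3}{2\alpha}\mathrm{d}\alpha$. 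As a check, one can also verify that these two $1$-forms are the Lee forms of $\overline{F}$ relative to $\overline{g}$, i.e. $\overline{\theta}(x)=\overline{g}^{ij}\overline{F}(e_{i},e_{j},x)$ and $\overline{\theta}^{*}(x)=\overline{g}^{ij}\overline{F}(e_{i},Qe_{j},x)$, using $\overline{g}^{ij}=\alpha^{-1}g^{ij}$. I expect the main (though purely computational) obstacle to be the careful bookkeeping of the six connection-difference terms in the second step, in particular confirming that the bare $\mathrm{d}\alpha(x)\,\widetilde{g}(y,z)$ contributions drop out; everything after that is the substitution of \eqref{c1} together with the identity $\widetilde{g}(\grad\alpha,\cdot)=\mathrm{d}\alpha\circ(Q+Q^{2})$.
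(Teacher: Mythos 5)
Your proposal is correct and follows essentially the same route as the paper: the conformal change of the Levi-Civita connection applied to $\overline{\widetilde{g}}=\alpha\widetilde{g}$, cancellation of the $\mathrm{d}\alpha(x)\,\widetilde{g}(y,z)$ contributions, substitution of \eqref{c1}, and regrouping; your invariant identity $\widetilde{g}(\grad\alpha,\cdot)=\mathrm{d}\alpha\circ(Q+Q^{2})$ is exactly the paper's matrix identity $\Phi=Q+Q^{2}$ from \eqref{S-Phi}. The only difference is presentational: you argue coordinate-free, while the paper carries out the same computation in indices in \eqref{F-barF}--\eqref{w1-2}.
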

\begin{proof}
We denote the following products
 \begin{equation}\label{fi-and-s}
 \widetilde{g}_{ij}g^{is}=\Phi_{j}^{s}\ ,\quad g_{ij}\widetilde{g}^{is}=\frac{1}{2}S_{j}^{s}\ ,
 \end{equation}
where $\Phi_{j}^{s}$ and $\frac{1}{2}S_{j}^{s}$ are mutually inverse matrices.
Because of \eqref{metricg}, \eqref{g-obr},  \eqref{f2.1} and \eqref{f-obr}  we have
\begin{equation}\label{S-Phi}
(\Phi_{j}^{s})=\begin{pmatrix}
      0 & 1 & 1 \\
      1 & 0 & 1 \\
      1 & 1 & 0 \\
    \end{pmatrix},\qquad
 (S_{j}^{s})=\begin{pmatrix}
      -1 & 1 & 1 \\
      1 & -1 & 1 \\
      1 & 1 & -1 \\
    \end{pmatrix}.
    \end{equation}
Having in mind \eqref{Q} and \eqref{S-Phi}, we get $\Phi=Q+Q^{2}$.

From \eqref{alfa}, \eqref{tild-alfa} and the second matrix of \eqref{S-Phi}, we get
 \begin{equation}\label{theta-s*}
 \begin{split}
  \theta^{*}_{i}=-\frac{1}{2}S_{i}^{s}\theta_{s}\ .
  \end{split}
\end{equation}

According to the transformation \eqref{conf}, the components of the tensor $\overline{F}$ are $\overline{F}_{ijk}=\overline{\nabla}_{i}\overline{\widetilde{g}}_{jk}$, where $\overline{\nabla}$ is the Levi-Civita connection of $\overline{g}$.

 Bearing in mind \eqref{metricf} and \eqref{conf}, we have that $\overline{\widetilde{g}}=\alpha \widetilde{g}$. Then \begin{equation}\label{barF}\overline{\nabla}\ \overline{\widetilde{g}}=\alpha\overline{\nabla}\widetilde{g}+\widetilde{g}\overline{\nabla}\alpha .\end{equation}

From the Christoffel formulas
\begin{equation}\label{cristofel}
 2\Gamma^{k}_{ij}=g^{ks}(\partial_{i}g_{sj}+\partial_{j}g_{si}-\partial_{s}g_{ij}), \ 2\overline{\Gamma}^{k}_{ij}=\overline{g}^{ks}(\partial_{i}\overline{g}_{sj}+\partial_{j}\overline{g}_{si}-\partial_{s}\overline{g}_{ij})
 \end{equation}
 and \eqref{conf} we get
 \begin{equation*}
 \overline{\Gamma}^{k}_{ij}= \Gamma^{k}_{ij}+\frac{1}{2\alpha}(\delta_{j}^{k}\alpha_{i}+\delta_{i}^{k}\alpha_{j}-g_{ij}g^{ks}\alpha_{s}),\quad
 \alpha_{s}=\frac{\partial\alpha}{\partial x^{s}}\ .
 \end{equation*}
 Then, using \eqref{defF} for $\overline{\nabla}\widetilde{g}$, we obtain
 \begin{equation}\label{F-barF}
 \begin{split}
  \overline{\nabla}_{k}\widetilde{g}_{ji}=\nabla_{k}\widetilde{g}_{ji} &-\frac{1}{2\alpha}(\widetilde{g}_{ji}\alpha_{k}+\widetilde{g}_{ik}\alpha_{j}-g_{kj}\Phi_{i}^{s}\alpha_{s})\\&-\frac{1}{2\alpha}(\widetilde{g}_{kj}\alpha_{i}+\widetilde{g}_{ij}\alpha_{k}-g_{ik}\Phi_{j}^{s}\alpha_{s}).
\end{split}
\end{equation}

Substituting \eqref{w1} and \eqref{F-barF} into \eqref{barF}, we get
 \begin{equation}\label{w1-2}
 \begin{split}
  \overline{F}_{kij}=\frac{1}{3}\big(\overline{g}_{kj}\overline{\theta}_{i}+\overline{g}_{ki}\overline{\theta}_{j}+\overline{\widetilde{g}}_{kj}\overline{\theta}^{*}_{i}+\overline{\widetilde{g}}_{ki}\overline{\theta}^{*}_{j}\big),\\
   \overline{\theta}_{i}=\theta_{i}+\frac{3}{2\alpha}\Phi_{i}^{s}\alpha_{s}\ ,\quad \overline{\theta}^{*}_{i}=-\frac{1}{2}S_{i}^{s}\overline{\theta}_{s}\ .
\end{split}
\end{equation}
Then, due to \eqref{Q}, \eqref{S-Phi} and \eqref{theta-s*},
 for $(M, \overline{g}, Q)$ it is valid the identity \eqref{overlineF}.
\end{proof}

Note. According to Theorem~\ref{connF-lineF}, we can say that $(M, g, Q)$ and $(M, \overline{g}, Q)$ belong to classes of the same type, defined by the equality \eqref{c1} for the corresponding metric.

Immediately, from \eqref{F} and \eqref{overlineF}, we have the following
\begin{cor}
 If $F=0$ holds, then it is valid
 \begin{equation}\label{W0-glob}
  \overline{F}(x, y,z)=\frac{1}{2}\big\{\overline{g}(x, y)\alpha(\Phi z)+\overline{g}(x, z)\alpha(\Phi y)-\overline{\widetilde{g}}(x, y)\alpha(z)-\overline{\widetilde{g}}(x, z)\alpha(y)\big\}.
\end{equation}
\end{cor}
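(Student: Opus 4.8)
The plan is to read off \eqref{W0-glob} as the special case $F=0$ of Theorem~\ref{connF-lineF}. First I would observe that the hypothesis $F=0$ forces both Lee forms to vanish: by the definitions in \eqref{F} we have $\theta(x)=g^{ij}F(e_{i},e_{j},x)$ and $\theta^{*}(x)=g^{ij}F(e_{i},Qe_{j},x)$, so $F\equiv 0$ gives $\theta=0$ and $\theta^{*}=0$. (This can also be seen from \eqref{c1}, but deriving it directly from \eqref{F} keeps the argument independent of that identity.)

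Next I would substitute $\theta=\theta^{*}=0$ into the formulas of Theorem~\ref{connF-lineF}. The transformed Lee forms then collapse to
\[
\overline{\theta}=\frac{3}{2\alpha}\,\mathrm{d}\alpha\circ(Q+Q^{2}),\qquad \overline{\theta}^{*}=-\frac{3}{2\alpha}\,\mathrm{d}\alpha,
\]
and, recalling from the proof of Theorem~\ref{connF-lineF} that $\Phi=Q+Q^{2}$, the first of these reads $\overline{\theta}=\frac{3}{2\alpha}\,\mathrm{d}\alpha\circ\Phi$. Plugging these two expressions into the right-hand side of \eqref{overlineF}, each of the four terms acquires the common scalar factor $\frac{3}{2\alpha}$, which combines with the overall coefficient $\frac13$ of \eqref{overlineF} to yield the coefficient appearing in \eqref{W0-glob}; the $+,+$ in front of the $\overline{g}$-terms together with the minus sign carried by $\overline{\theta}^{*}$ produce the sign pattern $+,+,-,-$, with $\mathrm{d}\alpha\circ\Phi$ standing in the two $\overline{g}$-terms and $\mathrm{d}\alpha$ in the two $\overline{\widetilde{g}}$-terms. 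That is precisely the asserted identity \eqref{W0-glob}.

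I do not expect any genuine obstacle: this is a corollary exactly because it is a single substitution. The only care needed is bookkeeping --- keeping the barred metrics $\overline{g}=\alpha g$ and $\overline{\widetilde{g}}=\alpha\widetilde{g}$ (not $g$ and $\widetilde{g}$) attached to the barred Lee forms when transcribing \eqref{overlineF}, and matching the symbol $\alpha(\cdot)$ used in \eqref{W0-glob} with $\mathrm{d}\alpha(\cdot)$ up to the normalisation by the conformal factor $\alpha$ built into that notation. None of the componentwise machinery of \eqref{nablaf}--\eqref{tild-alfa} is required.
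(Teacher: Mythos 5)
Your argument is correct and is precisely the paper's (essentially unstated) proof: the corollary is presented as following ``immediately from \eqref{F} and \eqref{overlineF}'', i.e.\ by observing that $F=0$ forces the contractions $\theta$ and $\theta^{*}$ to vanish and then substituting $\overline{\theta}=\frac{3}{2\alpha}\mathrm{d}\alpha\circ\Phi$, $\overline{\theta}^{*}=-\frac{3}{2\alpha}\mathrm{d}\alpha$ into Theorem~\ref{connF-lineF}. The one point of care you flag is real --- the substitution literally produces the prefactor $\tfrac{1}{2\alpha}$ in front of the barred metrics rather than $\tfrac{1}{2}$, so the notation $\alpha(\cdot)$ in \eqref{W0-glob} must be read as absorbing that normalisation of $\mathrm{d}\alpha$ --- and you have handled it correctly.
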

Next, we obtain
\begin{cor}
 If $F=0$ holds, then $\overline{F}$ vanishes if and only if $\alpha$ is a constant function.
\end{cor}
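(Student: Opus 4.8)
The plan is to start from the formula for $\overline{F}$ in the preceding Corollary, specialized to the hypothesis $F=0$. Recall that when $F=0$ we have, for all $x,y,z$,
\[
\overline{F}(x,y,z)=\tfrac{1}{2}\big\{\overline{g}(x,y)\alpha(\Phi z)+\overline{g}(x,z)\alpha(\Phi y)-\overline{\widetilde g}(x,y)\alpha(z)-\overline{\widetilde g}(x,z)\alpha(y)\big\},
\]
where $\alpha$ here is shorthand for the $1$-form $\mathrm{d}\alpha$ (matching the paper's notation $\alpha_s=\partial\alpha/\partial x^s$) and $\Phi=Q+Q^2$. The ``if'' direction is immediate: if $\alpha$ is constant then $\mathrm{d}\alpha=0$, so every term on the right-hand side vanishes and $\overline{F}=0$.

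For the ``only if'' direction, I would assume $\overline{F}=0$ identically and deduce $\mathrm{d}\alpha=0$. The cleanest route is to feed special arguments into the vanishing identity. Setting $y=z=e_i$ (no sum) and contracting, or more simply evaluating at a fixed tangent vector, one gets a linear relation among the covectors $\mathrm{d}\alpha\circ\Phi$ and $\mathrm{d}\alpha$ with coefficients built from $g$ and $\widetilde g$; since $\overline g=\alpha g$ and $\overline{\widetilde g}=\alpha\widetilde g$ with $\alpha>0$, the overall factor $\alpha$ can be dropped. Concretely, I would take the trace of the identity in the pair $(x,y)$ using $g^{ij}$: this produces $2\,\mathrm{d}\alpha\circ\Phi$ from the first two terms (since $g^{ij}g_{ij}=3$ and there is a symmetrization), a term $g^{ij}\widetilde g_{ij}\,\mathrm{d}\alpha(z)$, and a term $\widetilde g_{ij}g^{is}\,\mathrm{d}\alpha(\Phi\cdot)$-type contribution; using $\Phi=Q+Q^2$, $g^{ij}\widetilde g_{ij}=\operatorname{tr}\Phi=0$ (the diagonal of $(\Phi^s_j)$ in \eqref{S-Phi} is zero), and $\widetilde g_{ij}g^{is}=\Phi^s_j$, everything collapses to a multiple of $\mathrm{d}\alpha\circ\Phi + \mathrm{d}\alpha\circ\Phi^2$-type expression, i.e. a relation of the form $\mathrm{d}\alpha\circ(\text{invertible operator})=0$. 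Since $\Phi=Q+Q^2$ and $Q^3=\id$, $\Phi$ satisfies $\Phi^2=\Phi+2\,\id$ (from $(Q+Q^2)^2=Q^2+2Q^3+Q^4=Q^2+2\id+Q$), so any polynomial combination appearing is invertible, forcing $\mathrm{d}\alpha=0$ at each point, hence $\alpha$ locally constant; connectedness of $M$ then gives $\alpha$ constant.

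An alternative, perhaps more transparent, execution: plug $z=x$ into the identity and then also swap the roles to isolate terms, or simply observe that $\overline F(x,y,z)=0$ for all $z$ means the $(0,1)$-tensor $z\mapsto \overline F(x,y,z)$ is zero; choosing $x,y$ so that $\overline g(x,y)=1$ and $\overline{\widetilde g}(x,y)=0$ (possible since $\widetilde g$ is not proportional to $g$ — indeed $\widetilde g$ is indefinite while $g$ is positive definite, as one sees from \eqref{f-obr} and the determinant, so the quadratic forms are independent) already yields $\mathrm{d}\alpha\circ\Phi = -\,(\text{something})\,\mathrm{d}\alpha$, and choosing instead $\overline g(x,y)=0$, $\overline{\widetilde g}(x,y)=1$ yields a companion relation; together these force $\mathrm{d}\alpha=0$.

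The only real obstacle is bookkeeping: making sure the contraction or the choice of $x,y$ genuinely produces a relation in which the operator acting on $\mathrm{d}\alpha$ is invertible, rather than something that could vanish on a nonzero covector. The identity $\Phi^2=\Phi+2\,\id$ together with $\det\Phi=2\neq 0$ (consistent with $(\Phi^s_j)$ in \eqref{S-Phi} having inverse $\tfrac12 S$, again from \eqref{S-Phi}) settles this, so no genuine difficulty remains; the argument is a short linear-algebra computation followed by the standard ``closed $1$-form with zero differential on a connected manifold is constant'' step.
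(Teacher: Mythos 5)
Your proof is correct and follows essentially the same route as the paper: the paper also divides out $\alpha$ using $\overline{g}=\alpha g$, $\overline{\widetilde g}=\alpha\widetilde g$, contracts the vanishing identity with $g^{kj}$ (using $g^{kj}\widetilde g_{kj}=\operatorname{tr}\Phi=0$ and $\widetilde g_{ij}g^{is}=\Phi_j^s$) to obtain $\Phi_i^s\alpha_s=0$, and concludes $\mathrm{d}\alpha=0$ from the invertibility of $\Phi$ (inverse $\tfrac12 S$). The only slip is numerical and harmless: the first two terms contribute $(3+1)\,\mathrm{d}\alpha\circ\Phi$ rather than $2\,\mathrm{d}\alpha\circ\Phi$, and the full contraction yields $3\,\mathrm{d}\alpha\circ\Phi=0$, which still forces $\mathrm{d}\alpha=0$ exactly as you argue.
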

\begin{proof} In a local form \eqref{W0-glob} is
\begin{equation}\label{W0}
  \overline{F}_{kij}=\frac{1}{2}\big(\overline{g}_{kj}\Phi_{i}^{s}\alpha_{s}+\overline{g}_{ki}\Phi_{j}^{s}\alpha_{s}-\overline{\widetilde{g}}_{kj}\alpha_{i}-\overline{\widetilde{g}}_{ki}\alpha_{j}\big).
\end{equation}
Let $\overline{F}=0$ be valid. Then, from \eqref{conf} and \eqref{W0}, it follows $$g_{kj}\Phi_{i}^{s}\alpha_{s}+g_{ki}\Phi_{j}^{s}\alpha_{s}-\widetilde{g}_{kj}\alpha_{i}-\widetilde{g}_{ki}\alpha_{j}=0.$$
 Contracting by $g^{kj}$ in the latter equality, and using \eqref{fi-and-s} and \eqref{S-Phi}, we get
    $\Phi_{i}^{s}\alpha_{s}=0$
which implies $\alpha_{1}=\alpha_{2}=\alpha_{3}=0$, i.e. $\alpha$ is a constant.

Vice versa. If $\alpha$ is a constant, then its partial derivatives are $\alpha_{1}=\alpha_{2}=\alpha_{3}=0$, and having in mind \eqref{W0}, we get $\overline{F}=0$.
\end{proof}

\section{Some curvature properties of $(M, g, Q)$}\label{4}
It is well-known, that the curvature tensor $R$ of $\nabla$ is defined by
\begin{equation}\label{R}
R(x, y)z=\nabla_{x}\nabla_{y}z-\nabla_{y}\nabla_{x}z-\nabla_{[x,y]}z.
\end{equation}
Also, we consider the tensor of type $(0, 4)$ associated with $R$, defined as follows:
\begin{equation}\label{R2}
    R(x, y, z, u)=g(R(x, y)z,u).
\end{equation}
The Ricci tensor $\rho$ and the scalar curvature $\tau$ with respect to $g$ are as usually
\begin{equation}\label{def-rho}
    \rho(y,z)=g^{ij}R(e_{i}, y, z, e_{j}),\quad
    \tau=g^{ij}\rho(e_{i}, e_{j}).
\end{equation}

Let $\tilde{\Gamma}$ and $\tilde{\nabla}$ be the Christoffel symbols and the Levi-Civita connection of $\tilde{g}$, respectively. Let $\tilde{R}$
be the curvature tensor of $\tilde{\nabla}$. The Ricci tensor $\tilde{\rho}$ and the scalar curvature $\tilde{\tau}$ with respect to $\tilde{g}$ are
\begin{equation}\label{def-rho2}
    \tilde{\rho}(y,z)=\tilde{g}^{ij}\tilde{R}(e_{i}, y, z, e_{j}),\
    \tilde{\tau}=\tilde{g}^{ij}\tilde{\rho}(e_{i}, e_{j}).
\end{equation}

 From \eqref{defF}, using the Christoffel formulas \eqref{cristofel} for $\Gamma$ and $\tilde{\Gamma}$, we obtain the following relation
\begin{equation}\label{gamma3}
 \tilde{\Gamma}^{k}_{ij}=\Gamma^{k}_{ij}+\frac{1}{2}\tilde{g}^{ks}(\nabla_{i}\tilde{g}_{js}+\nabla_{j}\tilde{g}_{is}-\nabla_{s}\tilde{g}_{ij}).
 \end{equation}
 We substitute \eqref{w1} into \eqref{gamma3} and we get
  \begin{equation}\label{gamma4}
 \tilde{\Gamma}^{k}_{ij}=\Gamma^{k}_{ij}+\frac{1}{3}\tilde{g}^{ks}(g_{ij}\theta_{s}+\tilde{g}_{ij}\theta^{*}_{s}).
 \end{equation}

From  \eqref{fi-and-s}, \eqref{S-Phi} and \eqref{theta-s*}, we find
  \begin{equation}\label{tild-g}
    \tilde{g}^{sk}\theta_{s}=-\theta^{*k},\ \tilde{g}_{sk}\theta^{s}=\theta_{k}-2\theta^{*}_{k},\ \tilde{g}_{sk}\theta^{*s}=-\theta_{k},\ \tilde{g}^{sk}\theta^{*}_{s}=-\frac{1}{2}(\theta ^{k}+\theta^{*k}).
\end{equation}
We apply \eqref{tild-g} into \eqref{gamma4} and we obtain that the tensor $\textrm{T}=\widetilde{\Gamma}-\Gamma$ of the affine deformation of $\nabla$ has components
 \begin{equation}\label{torsion}
 T^{k}_{ij}=-\frac{1}{6}\big(2g_{ij}\theta^{*k}+\tilde{g}_{ij}(\theta^{k}+\theta^{*k})\big).
 \end{equation}

It is well-known the relation
$ \widetilde{R}^{k}_{ijs} = R^{k}_{ijs} + \nabla_{j}T^{k}_{is}-\nabla_{s}T^{k}_{ij}
+T^{a}_{is}T^{k}_{aj}-T^{a}_{ij}T^{k}_{as}.$
Then, using \eqref{theta-s*}, \eqref{tild-g} and \eqref{torsion}, we
calculate
\begin{equation*}
\begin{split}
     \tilde{R}^{k}_{ijs}& = R^{k}_{ijs} - \frac{1}{3}g_{is}(\nabla_{j}\theta^{*k}-\frac{1}{3}\theta^{*}_{j}\theta^{*k})+\frac{1}{3}g_{ij}(\nabla_{s}\theta^{*k}-\frac{1}{3}\theta^{*}_{s}\theta^{*k})\\
 &-\frac{1}{6}\tilde{g}_{is}(\nabla_{j}\theta^{*k}+\nabla_{j}\theta^{k}-\frac{1}{3}\theta_{j}\theta^{*k}-\frac{1}{3}\theta^{*}_{j}\theta^{*k})\\&+\frac{1}{6}\tilde{g}_{ij}(\nabla_{s}\theta^{*k}+\nabla_{s}\theta^{k}-\frac{1}{3}\theta_{s}\theta^{*k}-\frac{1}{3}\theta^{*}_{s}\theta^{*k}).
\end{split}
\end{equation*}
By contracting $k=s$ in the latter equality, and having in mind  \eqref{w1}, \eqref{fi-and-s}, \eqref{theta-s*}, \eqref{def-rho}, \eqref{def-rho2} and \eqref{tild-g}, we get
\begin{equation}\label{tilde-S}
\begin{split}
     \tilde{\rho}_{ij} = \rho_{ij}+\frac{1}{3}g_{ij}\nabla_{s}\theta^{*s}+\frac{1}{6}\tilde{g}_{ij}(\nabla_{s}\theta^{s}+\nabla_{s}\theta^{*s}).
\end{split}
\end{equation}

Let us denote
\begin{equation}\label{def-rho*}
    \tau^{*}=\tilde{g}^{ij}\rho(e_{i}, e_{j}),\quad \tilde{\tau}^{*}=g^{ij}\tilde{\rho}(e_{i}, e_{j}).
\end{equation}
Due to \eqref{def-rho}, \eqref{def-rho2}, \eqref{tilde-S} and \eqref{def-rho*} we obtain
\begin{equation*}
     \tilde{\rho}_{ij} = \rho_{ij}+\frac{1}{3}(\tilde{\tau}^{*}-\tau)g_{ij}+\frac{1}{6}(2\tilde{\tau}-2\tau^{*}+\tilde{\tau}^{*}-\tau)\tilde{g}_{ij},
\end{equation*}
Therefore we establish the following
\begin{thm}\label{connR-R}
 For the Ricci tensors  $\rho$ and $\tilde{\rho}$ and for the scalar curvatures $\tau$, $\tau^{*}$, $\tilde{\tau}$ and $\tilde{\tau}^{*}$ the following relation is valid:
 \begin{equation}\label{con-AE}
     \tilde{\rho}(x,y) = \rho(x,y)+\frac{1}{3}(\tilde{\tau}^{*}-\tau)g(x,y)+\frac{1}{6}(2\tilde{\tau}-2\tau^{*}+\tilde{\tau}^{*}-\tau)\tilde{g}(x,y).
\end{equation}
\end{thm}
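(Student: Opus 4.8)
The plan is to follow the effect of replacing the Levi-Civita connection $\nabla$ of $g$ by the Levi-Civita connection $\tilde\nabla$ of $\tilde g$ through the affine deformation tensor $T=\tilde\Gamma-\Gamma$, and then to take two successive contractions. The first ingredient is the compact form \eqref{torsion} of $T$: it is obtained by inserting the fundamental identity \eqref{w1} into the Christoffel relation \eqref{gamma3} to get \eqref{gamma4}, and then using the trace identities \eqref{tild-g} of $\theta$ and $\theta^{*}$ against $g$ and $\tilde g$. The point of this reduction is that $T^{k}_{ij}$ becomes a linear combination of $g_{ij}\theta^{*k}$ and $\tilde g_{ij}(\theta^{k}+\theta^{*k})$ only, which keeps the ensuing curvature computation under control.

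Next I would substitute \eqref{torsion} into the standard expression for $\tilde R$ in terms of $R$, $T$ and $\nabla T$ recalled just above, using $\nabla g=0$ and $\nabla\tilde g=F$ together with \eqref{w1} to push the covariant derivatives past the metric factors, and \eqref{theta-s*}, \eqref{tild-g} to reduce every quadratic term to a multiple of $\theta_{j}\theta^{*k}$ or $\theta^{*}_{j}\theta^{*k}$. Contracting $k=s$, the curvature term becomes $\rho_{ij}$ by \eqref{def-rho}, all the pointwise quadratic terms and most of the derivative terms cancel, and what remains is the Ricci relation \eqref{tilde-S}, namely $\tilde\rho_{ij}=\rho_{ij}+\tfrac13 g_{ij}\nabla_{s}\theta^{*s}+\tfrac16\tilde g_{ij}(\nabla_{s}\theta^{s}+\nabla_{s}\theta^{*s})$.

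The final step turns the two divergences $\nabla_{s}\theta^{*s}$ and $\nabla_{s}\theta^{s}$ into scalar curvatures. Contracting \eqref{tilde-S} once with $g^{ij}$ and once with $\tilde g^{ij}$, and using $g^{ij}g_{ij}=\tilde g^{ij}\tilde g_{ij}=3$, $g^{ij}\tilde g_{ij}=\mathrm{tr}\,\Phi=0$ and $\tilde g^{ij}g_{ij}=\tfrac12\mathrm{tr}\,S=-\tfrac32$ — all read off from \eqref{fi-and-s} and \eqref{S-Phi} — together with the definitions \eqref{def-rho}, \eqref{def-rho2}, \eqref{def-rho*}, I obtain the linear system $\tilde\tau^{*}=\tau+\nabla_{s}\theta^{*s}$ and $\tilde\tau=\tau^{*}+\tfrac12\nabla_{s}\theta^{s}$, hence $\nabla_{s}\theta^{*s}=\tilde\tau^{*}-\tau$ and $\nabla_{s}\theta^{s}=2\tilde\tau-2\tau^{*}$. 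Feeding these back into \eqref{tilde-S} produces exactly the coordinate expression displayed just before the statement, whose coordinate-free rewriting is \eqref{con-AE}.

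I expect the main obstacle to be the middle step: the formula for $\tilde R$ generates a sizeable number of terms once \eqref{torsion} and its covariant derivative are plugged in, and one must verify that, after the contraction $k=s$, everything except $\rho_{ij}$ and the two divergence terms disappears. The way to keep this tractable is never to expand $\theta$, $\theta^{*}$, $g$, $\tilde g$ into the explicit components of Section~\ref{3}, but to manipulate only the structural identities \eqref{w1}, \eqref{theta-s*} and \eqref{tild-g}; in particular, $\mathrm{tr}\,\Phi=0$ is precisely what prevents a $g_{ij}\nabla_{s}\theta^{s}$-type term from surviving and leaves \eqref{tilde-S} in the clean form above.
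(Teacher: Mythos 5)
Your proposal is correct and follows essentially the same route as the paper: compute the affine deformation tensor $T=\tilde{\Gamma}-\Gamma$ in the compact form \eqref{torsion} via \eqref{gamma4} and \eqref{tild-g}, plug it into the standard relation between $\tilde{R}$ and $R$, contract $k=s$ to reach \eqref{tilde-S}, and then contract with $g^{ij}$ and $\tilde{g}^{ij}$ (using $\mathrm{tr}\,\Phi=0$ and $\tilde{g}^{ij}g_{ij}=-\tfrac{3}{2}$) to identify $\nabla_{s}\theta^{*s}=\tilde{\tau}^{*}-\tau$ and $\nabla_{s}\theta^{s}=2\tilde{\tau}-2\tau^{*}$. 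Your final contraction step is spelled out more explicitly than in the paper, which only cites the definitions \eqref{def-rho}, \eqref{def-rho2} and \eqref{def-rho*}, but the argument and all intermediate identities coincide.
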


In \cite{AE}, a Riemannian manifold $(M, g, Q)$ is called
almost Einstein if the metrics $g$ and $\tilde{g}$ satisfy
\begin{equation}\label{AE}\rho(x, y) = \beta g(x, y) + \gamma \tilde{g}(x, y),\end{equation} where $\beta$ and $\gamma$ are smooth functions on $M$.

\begin{cor}\label{th2.4}
   If the Levi-Civita connection $\tilde{\nabla}$ of $\tilde{g}$ is a locally flat connection, then $(M, g, Q)$ is an almost Einstein manifold and the Ricci tensor $\rho$ has the form
 \begin{equation}\label{rho-AE}
\rho(x, y) = \frac{\tau}{3}g(x, y)+\frac{2\tau^{*}+\tau}{6}\tilde{g}(x,y).
\end{equation}
\begin{proof} If $\tilde{\nabla}$ is a locally flat connection, then we have $\widetilde{R}=0$. From \eqref{def-rho2} and \eqref{def-rho*} it follows $\widetilde{\rho}=0$ and $\widetilde{\tau}=\widetilde{\tau}^{*}=0$. Then \eqref{con-AE} implies \eqref{rho-AE} and according to \eqref{AE} we have that $(M, g, Q)$ is an almost Einstein manifold.
\end{proof}
\end{cor}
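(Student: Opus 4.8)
The plan is to reduce the statement entirely to the already-established relation \eqref{con-AE} of Theorem~\ref{connR-R}. First I would unpack the hypothesis: "$\tilde\nabla$ is a locally flat connection'' means $\widetilde R = 0$ on a neighbourhood of every point, hence by \eqref{def-rho2} the Ricci tensor $\tilde\rho$ of $\tilde g$ vanishes identically, and therefore $\tilde\tau = \tilde g^{ij}\tilde\rho(e_i,e_j) = 0$ and, using \eqref{def-rho*}, also $\tilde\tau^{*} = g^{ij}\tilde\rho(e_i,e_j) = 0$. The only thing worth checking here is the bookkeeping of the four scalar curvatures: both $\tilde\tau$ and $\tilde\tau^{*}$ are contractions of $\tilde\rho$ (against $\tilde g^{-1}$ and $g^{-1}$ respectively), so they die together with $\tilde\rho$, while $\tau$ and $\tau^{*}$ are built from $\rho$ and are left untouched.

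Second, I would substitute $\tilde\rho = 0$, $\tilde\tau = 0$, $\tilde\tau^{*} = 0$ into the pointwise tensor identity \eqref{con-AE}. The left-hand side becomes $0$, and solving for $\rho$ yields
\[
\rho(x,y) = \frac{\tau}{3}\,g(x,y) + \frac{2\tau^{*}+\tau}{6}\,\tilde g(x,y),
\]
which is exactly \eqref{rho-AE}.

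Third, I would observe that $\tau$ and $\tau^{*}$ are smooth functions on $M$, being traces of the smooth tensor $\rho$ against the (smooth) inverse metrics; hence putting $\beta = \tfrac{\tau}{3}$ and $\gamma = \tfrac{2\tau^{*}+\tau}{6}$ exhibits $\rho$ in the form \eqref{AE}, so by definition $(M, g, Q)$ is almost Einstein.

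I do not expect any genuine obstacle here: the substantive work has been front-loaded into Theorem~\ref{connR-R}, and the corollary is a one-line specialization. The only mild subtleties are the trace bookkeeping mentioned above and the remark that \emph{local} flatness suffices — which it does, precisely because \eqref{con-AE} is a pointwise identity and the conclusions \eqref{rho-AE}, \eqref{AE} are pointwise as well.
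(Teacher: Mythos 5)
Your proposal is correct and follows exactly the same route as the paper: deduce $\widetilde{R}=0$, hence $\tilde\rho=0$ and $\tilde\tau=\tilde\tau^{*}=0$ via \eqref{def-rho2} and \eqref{def-rho*}, then specialize the identity \eqref{con-AE} to obtain \eqref{rho-AE} and match it against the definition \eqref{AE}. The extra remarks on trace bookkeeping and the pointwise nature of the identities are sound but not needed beyond what the paper already records.
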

Note. Examples of manifolds $(M, g, Q)$ satisfying \eqref{rho-AE} are considered in \cite{AE}.

In an analogous way we prove
\begin{cor}\label{th2.5}
     If the Levi-Civita connection $\nabla$ of $g$ is a locally flat connection, then $(M, \tilde{g}, Q)$ is an almost Einstein manifold and the Ricci tensor $\tilde{\rho}$ has the form
 \begin{equation*}
\tilde{\rho}(x, y) = \frac{\tilde{\tau}^{*}}{3}g(x, y)+\frac{\tilde{\tau}^{*}+2\tilde{\tau}}{6}\tilde{g}(x,y).
\end{equation*}
\end{cor}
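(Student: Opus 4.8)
The plan is to mirror the proof of Corollary~\ref{th2.4}, interchanging the roles of $g$ and $\tilde g$, with the relation \eqref{con-AE} of Theorem~\ref{connR-R} as the only substantive input. First I would solve \eqref{con-AE} for the Ricci tensor $\rho$ of $g$, obtaining
\[
\rho(x,y) = \tilde\rho(x,y) - \frac{1}{3}(\tilde\tau^{*}-\tau)\,g(x,y) - \frac{1}{6}(2\tilde\tau - 2\tau^{*} + \tilde\tau^{*} - \tau)\,\tilde g(x,y).
\]
Then I would invoke the hypothesis that $\nabla$ is locally flat, i.e. $R=0$; by \eqref{R2} and \eqref{def-rho} this forces $\rho = 0$ and $\tau = 0$, and by \eqref{def-rho*} it gives $\tau^{*} = \tilde g^{ij}\rho(e_i,e_j) = 0$ as well. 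Substituting these three vanishings into the rearranged identity leaves
\[
0 = \tilde\rho(x,y) - \frac{1}{3}\tilde\tau^{*}\,g(x,y) - \frac{1}{6}(2\tilde\tau + \tilde\tau^{*})\,\tilde g(x,y),
\]
which is exactly the asserted formula for $\tilde\rho$ once the terms are transposed.

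For the almost-Einstein assertion I would observe that the metric $h$ associated to $\tilde g$ by $Q$ through the rule \eqref{metricf}, namely $h(x,y) = \tilde g(x,Qy)+\tilde g(Qx,y)$, satisfies $h = \tilde g + 2g$; this is a one-line computation from $Q^{3}=\id$ (equation \eqref{Q3}) and the $Q$-invariance \eqref{g3} of $g$, or just as quickly by applying the matrix rule \eqref{f2.1} to $(\tilde g_{ij})$ itself. Hence $g = \frac{1}{2}(h - \tilde g)$, and substituting this into the formula just obtained rewrites $\tilde\rho$ as $\tilde\rho = \frac{\tilde\tau}{3}\,\tilde g + \frac{\tilde\tau^{*}}{6}\,h$, a linear combination of $\tilde g$ and its $Q$-associated metric with smooth coefficients. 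By the definition \eqref{AE}, read on the triple $(M,\tilde g, Q)$, this is precisely the statement that $(M,\tilde g, Q)$ is almost Einstein.

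I do not anticipate a genuine obstacle here: the analytically heavy step — deriving \eqref{con-AE} by comparing the curvature tensors of $\nabla$ and $\tilde\nabla$ — is already in place, and what remains is linear algebra inside the two-parameter family of metrics spanned by $g$ and $\tilde g$, together with the vanishing of the curvature quantities of $\nabla$. The only point that needs care is purely bookkeeping: \eqref{con-AE} must be solved for $\rho$, rather than used directly as in Corollary~\ref{th2.4}, so that it is the flatness of $\nabla$ (and not of $\tilde\nabla$) that annihilates the appropriate terms; this is the exact mirror of the choice made there.
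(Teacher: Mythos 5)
Your argument is correct and is exactly the proof the paper intends when it states that Corollary~\ref{th2.5} follows ``in an analogous way'' to Corollary~\ref{th2.4}: the flatness of $\nabla$ gives $R=0$, hence $\rho=0$ and $\tau=\tau^{*}=0$, and substituting these into \eqref{con-AE} yields the displayed formula for $\tilde{\rho}$. Your additional observation that $\tilde{\rho}=\frac{\tilde{\tau}}{3}\tilde{g}+\frac{\tilde{\tau}^{*}}{6}h$, where $h=\tilde{g}+2g$ is the metric associated to $\tilde{g}$ by $Q$, is a welcome extra precision that the paper omits, since the definition \eqref{AE} read on $(M,\tilde{g},Q)$ strictly calls for a combination of $\tilde{g}$ and $h$ rather than of $g$ and $\tilde{g}$.
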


%%% ----------------------------------------------------------------
\section{Lie groups as manifolds of the considered type}\label{5}

Let $G$ be a $3$-dimensional real connected Lie group and $\mathfrak{g}$ be its Lie algebra with a basis $\{x_{1}, x_{2},x_{3}\}$ of left invariant vector fields. We introduce a circulant structure $Q$ and a Riemannian metric $g$ as follows:
\begin{equation}\label{lie}
  Qx_{1}=x_{2},\ Qx_{2}=x_{3},\ Qx_{3}=x_{1},
\end{equation}
  \begin{equation}\label{g}
 g(x_{i}, x_{j})= \left\{ \begin{array}{ll}
                        0, & i\neq j \hbox{;} \\
                        1, & i=j \hbox{.}
                      \end{array}
                    \right.
  \end{equation}
 Obviously, for the manifold $(G, g, Q)$ the equalities \eqref{Q3} and \eqref{g3} are valid, i.e. $(G, g, Q)$ is a Riemannian manifold of the same type as $(M, g, Q)$.

For the associated metric $\tilde{g}$, due to \eqref{metricf}, we get
\begin{equation}\label{f}
 \tilde{g}(x_{i}, x_{j})= \left\{ \begin{array}{ll}
                        1, & i\neq j \hbox{;} \\
                        0, & i=j \hbox{.}
                      \end{array}
                    \right.
  \end{equation}

The corresponding Lie algebra $\mathfrak{g}$ is determined as follows:
\begin{equation}\label{skobki}
  [x_{i}, x_{j}]=C_{ij}^{k}x_{k},
\end{equation}
where $C_{ij}^{k}=-C_{ji}^{k}.$
The Jacobi identity implies  \cite{HM-DM}
\begin{equation}\label{cij-o}
\begin{split}
C_{23}^{1}(C_{31}^{3}-C_{12}^{2})&=C_{23}^{2}C_{21}^{1}+C_{23}^{3}C_{31}^{1},\\ C_{12}^{3}(C_{31}^{1}-C_{23}^{2})&=C_{23}^{3}C_{21}^{2}+C_{31}^{3}C_{12}^{1},\\ C_{13}^{2}(C_{23}^{3}-C_{12}^{1})&=C_{23}^{2}C_{13}^{3}+C_{12}^{2}C_{31}^{1}.
\end{split}
\end{equation}

Let us denote
\begin{equation}\label{C}
\begin{split}
\lambda_{1}=C_{12}^{1},\  \lambda_{2}=C_{12}^{2},\  \lambda_{3}=C_{12}^{3},\\ \mu_{1}=C_{13}^{1},\  \mu_{2}=C_{13}^{2},\  \mu_{3}=C_{13}^{3},\\  \nu_{1}=C_{23}^{1}, \ \nu_{2}=C_{23}^{2},\ \nu_{3}=C_{23}^{3}.
\end{split}
\end{equation}
Then \eqref{skobki}, applying \eqref{C}, takes the form
\begin{equation}\label{skobki-o}
\begin{split}
  [x_{1}, x_{2}]&=\lambda_{1}x_{1}+\lambda_{2}x_{2}+\lambda_{3}x_{3},\\
  [x_{2}, x_{3}]&=\nu_{1}x_{1}+\nu_{2}x_{2}+\nu_{3}x_{3},\\
  [x_{1}, x_{3}]&=\mu_{1}x_{1}+\mu_{2}x_{2}+\mu_{3}x_{3}.\\
  \end{split}
\end{equation}
According to \eqref{cij-o} and \eqref{C}, we obtain the following conditions:
\begin{equation}\label{jakobi}
\begin{split}
\nu_{1}(\mu_{3}-\lambda_{2})&=\nu_{2}\lambda_{1}+\mu_{1}\nu_{3},\\ \lambda_{3}(\mu_{1}-\nu_{2})&=\nu_{3}\lambda_{2}+\mu_{3}\lambda_{1},\\ \mu_{2}(\nu_{3}-\lambda_{1})&=\nu_{2}\mu_{3}+\lambda_{2}\mu_{1}.
\end{split}
\end{equation}
The well-known Koszul formula implies
\begin{equation*}
    2g(\nabla_{x_{i}}x_{j}, x_{k})=g([x_{i}, x_{j}],x_{k})+g([x_{k}, x_{i}],x_{j})+g([x_{k}, x_{j}],x_{i}),
\end{equation*}
and using \eqref{g} and \eqref{skobki-o}, we obtain
\begin{align}\label{nabla}\nonumber
    \nabla_{x_{1}}x_{1}&=-\lambda_{1}x_{2}-\mu_{1}x_{3},\
    \nabla_{x_{1}}x_{2}=\lambda_{1}x_{1}+\frac{1}{2}(\lambda_{3}-\mu_{2}-\nu_{1})x_{3},\\\nonumber
    \nabla_{x_{1}}x_{3}&=\mu_{1}x_{1}+\frac{1}{2}(\mu_{2}-\lambda_{3}+\nu_{1})x_{2},\\\nonumber
    \nabla_{x_{2}}x_{1}&=-\lambda_{2}x_{2}-\frac{1}{2}(\lambda_{3}+\nu_{1}+\mu_{2})x_{3},\\
    \nabla_{x_{2}}x_{2}&=\lambda_{2}x_{1}-\nu_{2}x_{3},\
    \nabla_{x_{2}}x_{3}=\frac{1}{2}(\nu_{1}+\mu_{2}+\lambda_{3})x_{1}+\nu_{2}x_{2},\\\nonumber
    \nabla_{x_{3}}x_{1}&=\frac{1}{2}(\nu_{1}-\mu_{2}-\lambda_{3})x_{2}-\mu_{3}x_{3},\\\nonumber
    \nabla_{x_{3}}x_{2}&=\frac{1}{2}(-\nu_{1}+\mu_{2}+\lambda_{3})x_{1}-\nu_{3}x_{3},\
    \nabla_{x_{3}}x_{3}=\mu_{3}x_{1}+\nu_{3}x_{2}.\\\nonumber
\end{align}
Further, bearing in mind \eqref{F}, \eqref{g} and \eqref{nabla}, for the components of $F$, $\theta$ and $\theta^{*}$ we get
\begin{align}\label{F1-o}\nonumber
  F_{111} & =2\lambda_{1}+2\mu_{1},\quad
  F_{112}  =\frac{1}{2}(2\mu_{1}+\mu_{2}-\lambda_{3}+\nu_{1}), \\\nonumber
  F_{113} & =\frac{1}{2}(2\lambda_{1}+\lambda_{3}-\mu_{2}-\nu_{1}), \quad
  F_{122}  =-2\lambda_{1}+\nu_{1}+\mu_{2}-\lambda_{3}, \\\nonumber
  F_{123} & =-\lambda_{1}-\mu_{1}, \quad
  F_{221}  =\frac{1}{2}(\lambda_{3}+\nu_{1}+\mu_{2}+2\nu_{2}), \\
  F_{222} & =-2\lambda_{2}+2\nu_{2}, \quad
  F_{223}  =-\frac{1}{2}(2\lambda_{2}+\nu_{1}+\mu_{2}+\lambda_{3}), \\\nonumber
   F_{211} & =2\lambda_{2}+\nu_{1}+\mu_{2}+\lambda_{3},\quad
  F_{213} =\lambda_{2}-\nu_{2}, \\\nonumber
  F_{133} &=-2\mu_{1}-\nu_{1}-\mu_{2}+\lambda_{3},\quad F_{233} =-\lambda_{3}-\nu_{1}-2\nu_{2}-\mu_{2}, \\\nonumber
   F_{311} &=\lambda_{3}-\nu_{1}+\mu_{2}+2\mu_{3},\quad F_{312} =\nu_{3}+\mu_{3}, \\\nonumber
    F_{313} &=-\frac{1}{2}(-\lambda_{1}+\nu_{1}-\mu_{2}+2\nu_{3}),\quad F_{322} =-\lambda_{3}+\nu_{1}-\mu_{2}+2\nu_{3}, \\\nonumber
    F_{332}& =-\frac{1}{2}(\lambda_{3}-\nu_{1}+\mu_{2}+2\mu_{3}),\quad F_{333} =-2\mu_{3}-2\nu_{3}, \\\nonumber
\end{align}
\begin{align}\label{theta-o2}\nonumber
\theta_{1}&=2\lambda_{1}+\lambda_{3}+2\mu_{1}+\mu_{2}+\nu_{2}-\nu_{3},\\
\theta_{2}&=-2\lambda_{2}-\lambda_{3}+\mu_{1}-\mu_{3}+\nu_{1}+2\nu_{2},\\\nonumber
\theta_{3}&=\lambda_{1}-\lambda_{2}-\mu_{2}-2\mu_{3}-\nu_{1}-2\nu_{3},\\\nonumber
\end{align}
\begin{align}\label{tild-theta2-o}\nonumber
\theta^{*}_{1}&=-\frac{1}{2}(-\lambda_{1}-3\lambda_{2}-2\lambda_{3}-\mu_{1}-2\mu_{2}-3\mu_{3}+\nu_{2}-\nu_{3}),\\
\theta^{*}_{2}&=-\frac{1}{2}(3\lambda_{1}+\lambda_{2}+2\lambda_{3}+\mu_{1}-\mu_{3}-2\nu_{1}-\nu_{2}-3\nu_{3}),\\\nonumber
\theta^{*}_{3}&=-\frac{1}{2}(\lambda_{1}-\lambda_{2}+3\mu_{1}+2\mu_{2}+\mu_{3}+2\nu_{1}+3\nu_{2}+\nu_{3}).\\\nonumber
\end{align}
\begin{thm}\label{kt2}
The manifold $(G, g, Q)$ has a corresponding Lie algebra determined by
\begin{equation}\label{skobki-o2}
\begin{split}
  [x_{1}, x_{2}]&=\lambda_{1}x_{1}+\lambda_{2}x_{2}+\lambda_{3}x_{3},\\
  [x_{2}, x_{3}]&=\nu_{1}x_{1}+\nu_{2}x_{2}+\nu_{3}x_{3},\\
   [x_{1}, x_{3}]&=(\nu_{2}+\lambda_{3})x_{1}+(\nu_{3}+\lambda_{1})x_{2}+(\nu_{1}+\lambda_{2})x_{3}\\
  \end{split}
\end{equation}
where the structure constants satisfy the following conditions:
\begin{equation}\label{jacobi}
\begin{split}
 2\nu_{1}\lambda_{2}+\nu^{2}_{1}-\lambda_{1}\nu_{2}-\nu_{2}\nu_{3}-\lambda_{3}\nu_{3}=0,\\ 2\nu_{2}\lambda_{3}+\lambda^{2}_{3}-\lambda_{2}\nu_{3}-\lambda_{1}\lambda_{2}-\lambda_{1}\nu_{1}=0,\\  -\lambda^{2}_{1}+\nu^{2}_{3}+\lambda_{2}\lambda_{3}-\nu_{1}\nu_{2}=0.\\
\end{split}
\end{equation}
\end{thm}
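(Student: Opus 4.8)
The plan is to impose the characteristic identity \eqref{c1} --- equivalently, its local form \eqref{w1} --- on the Lie group manifold $(G, g, Q)$ and to read off the constraints this forces on the structure constants. All the ingredients are already available: the components $F_{ijk}$ are listed in \eqref{F1-o}, the Lee forms $\theta$ and $\theta^{*}$ in \eqref{theta-o2} and \eqref{tild-theta2-o}, while in the left-invariant frame $\{x_i\}$ the two metrics have the constant components $g_{ij}=\delta_{ij}$ and $\tilde g_{ij}=1-\delta_{ij}$ by \eqref{g} and \eqref{f}.

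First I would substitute these data into \eqref{w1}. Since both sides are symmetric in the last two indices --- by \eqref{prop-F} on the left and manifestly on the right --- it suffices to run through the unordered index patterns: for each fixed $k$ one obtains six equations, corresponding to $\{i,j\}$ equal to $\{1,1\},\{2,2\},\{3,3\},\{1,2\},\{1,3\},\{2,3\}$, so altogether $18$ linear equations in the nine unknowns $\lambda_1,\dots,\nu_3$. For example the pattern $(k,i,j)=(1,1,1)$ reads $F_{111}=\tfrac23\theta_1$, which after \eqref{F1-o} and \eqref{theta-o2} collapses to $\mu_1-\mu_2=\lambda_3-\lambda_1+\nu_2-\nu_3$; the pattern $(1,1,2)$ reads $F_{112}=\tfrac13(\theta_2+\theta^{*}_{1})$, and so on cyclically.

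Next I would solve this overdetermined system. The expectation is that it is consistent and equivalent to precisely the three relations $\mu_1=\lambda_3+\nu_2$, $\mu_2=\lambda_1+\nu_3$, $\mu_3=\lambda_2+\nu_1$: the three ``diagonal'' patterns $(1,1,1)$, $(2,2,2)$, $(3,3,3)$ give $\mu_1-\mu_2$, $\mu_2-\mu_3$, $\mu_3-\mu_1$, and combining one of these with an ``off-diagonal'' pattern such as $(1,1,2)$ determines each $\mu_i$ individually; one then verifies that the remaining patterns are automatically satisfied. Feeding these three relations into \eqref{skobki-o} replaces $[x_1,x_3]$ by $(\nu_2+\lambda_3)x_1+(\nu_3+\lambda_1)x_2+(\nu_1+\lambda_2)x_3$, which is exactly \eqref{skobki-o2}.

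Finally I would substitute the same three relations into the Jacobi conditions --- it is cleanest to return to \eqref{cij-o} and express them through \eqref{C} --- and simplify. Once the terms containing $\mu_1,\mu_2,\mu_3$ are eliminated, each of the three conditions becomes a purely quadratic relation in the $\lambda$'s and $\nu$'s, and a short computation identifies these three relations with \eqref{jacobi}. The main obstacle is the middle step: carrying out the $18$-equation reduction carefully enough to be certain that the system is consistent and imposes nothing beyond the three displayed linear relations --- in particular that it does not force any structure constant to vanish. The substitution into the Koszul-derived components and the final quadratic simplification of the Jacobi identity are routine bookkeeping.
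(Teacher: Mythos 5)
Your proposal follows essentially the same route as the paper: there, the identity \eqref{c1} is specialized to the constant frame metrics \eqref{g} and \eqref{f} (yielding the system \eqref{F1-lemma}), which is compared with \eqref{F1-o}, \eqref{theta-o2} and \eqref{tild-theta2-o} to extract exactly the three relations $\mu_1=\nu_2+\lambda_3$, $\mu_2=\nu_3+\lambda_1$, $\mu_3=\nu_1+\lambda_2$ of \eqref{mu-theta-o}, and these are then substituted into \eqref{skobki-o} and \eqref{jakobi} to obtain \eqref{skobki-o2} and \eqref{jacobi}. Your plan is correct and matches the paper's argument.
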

\begin{proof}
Since  $(G, g, Q)$ is of the type of the manifold $(M, g, Q)$, the equality \eqref{c1} is valid. Having in mind \eqref{c1}, \eqref{g} and \eqref{f}, we get
\begin{align}\label{F1-lemma}\nonumber
  F_{111} & =\frac{2}{3}\theta_{1},\quad
  F_{112}  =\frac{1}{3}(\theta_{2}+\theta^{*}_{1}),\quad
  F_{113}  =\frac{1}{3}(\theta_{3}+\theta^{*}_{1}),\\\nonumber
  F_{122}  &=F_{322} =\frac{2}{3}\theta^{*}_{2},\quad
  F_{123}  =\frac{1}{3}(\theta^{*}_{3}+\theta^{*}_{2}),\quad
  F_{221}  =\frac{1}{3}(\theta_{1}+\theta^{*}_{2}),\\
  F_{222} & =\frac{2}{3}\theta_{2},\quad
  F_{223}  =\frac{1}{3}(\theta_{3}+\theta^{*}_{2}),\quad
   F_{211}  =F_{311} =\frac{2}{3}\theta^{*}_{1},\\\nonumber
  F_{213} &=\frac{1}{3}(\theta^{*}_{3}+\theta^{*}_{1}),\quad
  F_{133} =F_{233} =\frac{2}{3}\theta^{*}_{3},\quad
   \ F_{312} =\frac{1}{3}(\theta^{*}_{2}+\theta^{*}_{1}),\\\nonumber
    F_{313} &=\frac{1}{3}(\theta_{1}+\theta^{*}_{3}),\quad
    F_{332} =\frac{1}{3}(\theta_{2}+\theta^{*}_{3}),\quad F_{333} =\frac{2}{3}\theta_{3}. \\\nonumber
\end{align}
Then, using \eqref{F1-o}, \eqref{theta-o2} and \eqref{tild-theta2-o}, we find
\begin{equation}\label{mu-theta-o}
  \mu_{1}=\nu_{2}+\lambda_{3}, \quad \mu_{2}=\nu_{3}+\lambda_{1},\quad \mu_{3}=\nu_{1}+\lambda_{2}.
\end{equation}
From \eqref{skobki-o}, \eqref{jakobi} and \eqref{mu-theta-o} we obtain \eqref{skobki-o2} and \eqref{jacobi}.
\end{proof}
 \begin{prop} The components of $R$, $F$, $\theta$ and $\theta^{*}$ with respect to the basis $\{x_{i}\}$ for $(G, g, Q)$ are
  \begin{equation}\label{r1-o}
\begin{split}
    R_{1212}&=-\frac{1}{4}(\lambda_{1}+ \nu_{1}+\nu_{3})^{2}+\frac{3}{4}\lambda^{2}_{3}+\lambda^{2}_{1}+\lambda^{2}_{2}+\nu_{2}\lambda_{3}+\nu^{2}_{2}\\&+\frac{1}{2}\lambda_{1}\lambda_{3}+\frac{1}{2}\nu_{3}\lambda_{3}-\frac{1}{2}\lambda_{3}\nu_{1}, \\
     R_{1313}&=\frac{3}{4}(\lambda_{1}+\nu_{3})^{2}-\frac{1}{4}(\lambda_{3}-\nu_{1})^{2}+(\lambda_{3}+\nu_{2})^{2}+(\lambda_{2}+\nu_{1})^{2}\\&+\frac{1}{2}(\lambda_{1}+\nu_{3})(\nu_{1}+\lambda_{3})-\lambda_{1}\nu_{3}, \\
     R_{2323}&=-\frac{1}{4}\lambda^{2}_{1}+\lambda^{2}_{2}-\frac{1}{4}\lambda^{2}_{3}+\frac{3}{4}\nu_{1}^{2}+\nu^{2}_{2}+\frac{3}{4}\nu_{3}^{2}+\frac{1}{2}\lambda_{1}(-\nu_{3}+\nu_{1}-\lambda_{3})\\&-\frac{1}{2}\lambda_{3}\nu_{3}+\lambda_{2}\nu_{1}-\frac{1}{2}\nu_{1}\lambda_{3}+\frac{1}{2}\nu_{1}\nu_{3}, \\
    R_{1223}&=-\lambda_{2}\lambda_{3}+\lambda_{1}\nu_{1}+\lambda^{2}_{1}+\lambda_{1}\nu_{3}+\lambda_{3}\nu_{3},\\
     R_{1213}&=\lambda_{2}(\lambda_{1}+\nu_{1}+\nu_{3})+\lambda_{3}(\lambda_{1}+\lambda_{2}+\nu_{1}), \\
     R_{1332}&=-\lambda_{1}\nu_{2}-\nu_{2}\nu_{3}-\lambda_{3}\nu_{1}-\nu_{2}\lambda_{3}-\nu_{1}\nu_{3}-\nu_{2}\nu_{1}, \\
         \end{split}
\end{equation}
\begin{align}\label{F2-o}\nonumber
  F_{111} & =2\lambda_{1}+2\lambda_{3}+2\nu_{2},\quad
  F_{112} = F_{221}  =\frac{1}{2}(\lambda_{1}+\lambda_{3}+\nu_{1}+2\nu_{2}+\nu_{3}), \\\nonumber
  F_{113}  &= F_{331} =\frac{1}{2}(\lambda_{1}+\lambda_{3}-\nu_{1}-\nu_{3}),\quad  F_{222}  =-2\lambda_{2}+2\nu_{2}, \\\nonumber
  F_{123}  &=-\lambda_{1}-\lambda_{3}-\nu_{2},\quad
  F_{211} = F_{311} =\lambda_{1}+2\lambda_{2}+\lambda_{3}+\nu_{1}+\nu_{3},\\
  F_{223} & =F_{332}  =-\frac{1}{2}(\lambda_{1}+2\lambda_{2}+\lambda_{3}+\nu_{1}+\nu_{3}), \\\nonumber
  F_{233} &=F_{133} =-\lambda_{1}-\lambda_{3}-\nu_{1}-2\nu_{2}-\nu_{3},\quad F_{312} =\lambda_{2}+\nu_{1}+\nu_{3},\\\nonumber
  F_{133} &=-\lambda_{1}-\lambda_{3}-\nu_{1}-2\nu_{2}-\nu_{3},\ F_{213} =\lambda_{2}-\nu_{2}, \\\nonumber
    F_{122}  &=F_{322} =-\lambda_{1}-\lambda_{3}+\nu_{1}+\nu_{3},\quad
       F_{333} =-2\lambda_{2}-2\nu_{1}-2\nu_{3}, \\\nonumber
\end{align}
\begin{align}\label{theta-o3}\nonumber
\theta_{1}&=3(\lambda_{1}+\lambda_{3}+\nu_{2}),\quad
\theta_{2}=3(\nu_{2}-\lambda_{2}),\quad
\theta_{3}=-3(\lambda_{2}+\nu_{1}+\nu_{3}),\\
\theta^{*}_{1}&=\frac{3}{2}(\lambda_{1}+2\lambda_{2}+\lambda_{3}+\nu_{1}+\nu_{3}),\quad
\theta^{*}_{2}=-\frac{3}{2}(\lambda_{1}+\lambda_{3}-\nu_{1}-\nu_{3}),\\\nonumber
\theta^{*}_{3}&=-\frac{3}{2}(\lambda_{1}+\lambda_{3}+\nu_{1}+2\nu_{2}+\nu_{3}).\\\nonumber
\end{align}
\end{prop}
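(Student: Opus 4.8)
The plan is to derive everything from data already available in the text: the Levi--Civita connection \eqref{nabla}, expressed in terms of $\lambda_i,\mu_i,\nu_i$, together with the relations $\mu_1=\nu_2+\lambda_3$, $\mu_2=\nu_3+\lambda_1$, $\mu_3=\nu_1+\lambda_2$ from \eqref{mu-theta-o}. First I would substitute these three relations into \eqref{nabla} to express each $\nabla_{x_i}x_j$ purely in terms of the $\lambda$'s and $\nu$'s; this gives the working form of the connection of $(G,g,Q)$ compatible with the Lie algebra \eqref{skobki-o2}.

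For $F$, $\theta$ and $\theta^{*}$ the argument is a direct substitution: inserting \eqref{mu-theta-o} into \eqref{F1-o}, \eqref{theta-o2} and \eqref{tild-theta2-o} and collecting terms produces \eqref{F2-o} and \eqref{theta-o3}. As a cross-check one verifies that these components satisfy \eqref{F1-lemma}, which must hold because $(G,g,Q)$ is of the type \eqref{c1} by Theorem \ref{kt2}; in particular the identity $\theta^{*}_i=-\tfrac12 S_i^{s}\theta_s$ with $S$ as in \eqref{S-Phi} provides a convenient consistency test.

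For the curvature I would use \eqref{R} and \eqref{R2}. Since $g(x_i,x_j)=\delta_{ij}$ in the basis $\{x_i\}$, the component $R_{ijk\ell}=g(R(x_i,x_j)x_k,x_\ell)$ equals the $x_\ell$-coefficient of the vector $R(x_i,x_j)x_k=\nabla_{x_i}\nabla_{x_j}x_k-\nabla_{x_j}\nabla_{x_i}x_k-\nabla_{[x_i,x_j]}x_k$, which is computed from the reduced connection and the brackets \eqref{skobki-o2} by $\Real$-linearity in the subscript. In dimension three the Riemann tensor has exactly six independent components, so it suffices to evaluate $R_{1212}$, $R_{1313}$, $R_{2323}$, $R_{1223}$, $R_{1213}$ and $R_{1332}$; each of these reduces to a homogeneous quadratic polynomial in the structure constants, and the Jacobi relations \eqref{jacobi} are then used to rewrite each polynomial in the form displayed in \eqref{r1-o}.

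The main obstacle is entirely computational and concentrated in the curvature step: one must compose the nine connection formulas, extract a single coefficient for each of the six components, and then apply \eqref{jacobi} repeatedly to reach the precise expressions in \eqref{r1-o}; the first Bianchi identity and the antisymmetry and pair-symmetry of $R$ should be exploited to keep the number of compositions actually performed to a minimum. The computation of $F$, $\theta$ and $\theta^{*}$ presents no difficulty beyond bookkeeping.
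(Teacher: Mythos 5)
Your proposal is correct and follows essentially the same route as the paper: the paper's proof is precisely the direct computation you describe, namely substituting the relations \eqref{mu-theta-o} into \eqref{F1-o}, \eqref{theta-o2} and \eqref{tild-theta2-o} to obtain \eqref{F2-o} and \eqref{theta-o3}, and evaluating the curvature components via \eqref{R} and \eqref{R2} from the connection \eqref{nabla} (reduced by \eqref{mu-theta-o}) together with the brackets \eqref{skobki-o2}. Your added consistency check against \eqref{F1-lemma} and the use of the symmetries of $R$ to limit the number of compositions are sensible refinements but do not change the argument.
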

\begin{proof}
The statement follows from \eqref{R}, \eqref{R2}, \eqref{F1-o}, \eqref{theta-o2}, \eqref{tild-theta2-o} and \eqref{mu-theta-o}.
\end{proof}
\section{Manifolds $(G, g, Q)$ with sectional curvatures invariant under $Q$}\label{6}

If $\{x, y\}$ is a non-degenerate $2$-plane spanned by vectors $x, y$ in the tangent space of the manifold, then its sectional curvature is
\begin{equation}\label{3.3}
    k(x,y)=\frac{R(x, y, x, y)}{g(x, x)g(y, y)-g^{2}(x, y)}\ .
\end{equation}

Let $G'$ be a subgroup of $G$ and $(G', g, Q)$ be a manifold
with sectional curvatures which are invariant under $Q$, i.e. according to \eqref{g} and \eqref{3.3} we have
\begin{equation}\label{V2}
  R(Qx, Qy, Qx, Qy)=R(x, y, x, y).
\end{equation}
Because of \eqref{lie}, the identity \eqref{V2} is equivalent to
\begin{equation}\label{r1=r6}
    R_{1212}=R_{1313}=R_{2323}.
\end{equation}
From \eqref{r1-o} and \eqref{r1=r6}, we find
\begin{equation}\label{R1=R2=R3}
\begin{split}
 \lambda^{2}_{1}+ \lambda^{2}_{3}+ \lambda_{1}\nu_{3}+\lambda_{3}\nu_{3}+ \lambda_{1}\lambda_{3}+ \lambda_{2}\nu_{1}+\lambda_{3}\nu_{1}+ 2\lambda_{3}\nu_{2}=0,\\ \nu^{2}_{1}+ \nu^{2}_{3}+\lambda_{3}\nu_{2}+\lambda_{1}\nu_{3}+ \lambda_{3}\nu_{1}+ \lambda_{1}\nu_{1}+\nu_{3}\nu_{1}+ 2\lambda_{2}\nu_{1}=0.\\
\end{split}
\end{equation}

We will consider the following instances where the system of equations \eqref{jacobi} and \eqref{R1=R2=R3} is executed:

\textbf{Case~(A)}
$\ \lambda_{3}=0,\ \nu_{1}=0,\ \nu_{3}=-\lambda_{1},$

\textbf{Case~(B)}
 $\ \lambda_{3}=-\lambda_{1}-\lambda_{2},\ \nu_{1}=-\lambda_{1}-\lambda_{2},\ \nu_{2}=\lambda_{1},\ \nu_{3}=\lambda_{2},$

 \textbf{Case~(C)}
 $\ \lambda_{3}=-\lambda_{1}-\lambda_{2},\ \nu_{1}=\lambda_{1},\ \nu_{2}=\lambda_{2},\ \nu_{3}=-\lambda_{1}-\lambda_{2}.$

%--------------------------------------------------------------------------------------------------------------------------------------------------------------
Let us consider Case~(A). From \eqref{skobki-o2} and \eqref{jacobi} it follows that the Lie algebra is determined by the commutators:
\begin{equation}\label{skobki-ex1}
\begin{split}
  [x_{1}, x_{2}]&=\lambda_{1}x_{1}+\lambda_{2}x_{2},\\
  [x_{2}, x_{3}]&=\nu_{2}x_{2}-\lambda_{1}x_{3},\\
   [x_{1}, x_{3}]&=\nu_{2}x_{1}+\lambda_{2}x_{3}.\\
  \end{split}
\end{equation}
Then, using \eqref{F2-o} and \eqref{theta-o3}, for the components of  $F$, $\theta$ and $\theta^{*}$ we get
\begin{align}\label{F-ex1}\nonumber
F_{111} & =2\lambda_{1}+2\nu_{2},\quad
  F_{112}  =\nu_{2},\quad
  F_{113}  =\lambda_{1}, \\\nonumber
  F_{122}  &= F_{322} =-2\lambda_{1},\quad
  F_{123}  =-\lambda_{1}-\nu_{2},\quad
  F_{221}  =\nu_{2},\\
  F_{222}  &=-2\lambda_{2}+2\nu_{2},\quad
  F_{223}  =-\lambda_{2},\quad
   F_{211}  =F_{311}=2\lambda_{2},\\\nonumber
  F_{213} &=\lambda_{2}-\nu_{2},\quad
  F_{133} =F_{233} =-2\nu_{2},\quad
   \ F_{312} =\lambda_{2}-\lambda_{1},\\\nonumber
    F_{313} &=\lambda_{1},\quad
    F_{332} =-\lambda_{2},\quad F_{333} =-2\lambda_{2}+2\lambda_{1},\\\nonumber
\end{align}
\begin{equation}\label{theta-ex1}
\begin{array}{lll}
\theta_{1}=3(\lambda_{1}+\nu_{2}),\quad &
\theta_{2}=3(\nu_{2}-\lambda_{2}),\quad &
\theta_{3}=3(\lambda_{1}-\lambda_{2}),\\
\theta^{*}_{1}=3\lambda_{2},\quad &
\theta^{*}_{2}=-3\lambda_{1},\quad &
\theta^{*}_{3}=-3\nu_{2}.
\end{array}
\end{equation}

From \eqref{def-rho} and \eqref{r1-o} we obtain all nonzero components of $R$ and $\rho$ on $(G',g, Q)$:
\begin{equation}\label{R-ex1}
    R_{1212}= R_{2323}=R_{1313}=\lambda^{2}_{1}+\lambda^{2}_{2}+\nu^{2}_{2},
\end{equation}
\begin{equation}\label{rho-ex1}
     \rho_{11}= \rho_{22}=\rho_{33}=-2(\lambda^{2}_{1}+\lambda^{2}_{2}+\nu^{2}_{2}).
\end{equation}
Then the scalar curvature of  $(G',g, Q)$ is
\begin{equation}\label{tau-ex1}
    \tau=-6(\lambda^{2}_{1}+\lambda^{2}_{2}+\nu^{2}_{2}).
\end{equation}
From \eqref{g}, \eqref{rho-ex1} and \eqref{tau-ex1} it follows
$\rho=\dfrac{\tau}{3}g,$
 i.e. $(G', g, Q)$ is an Einstein manifold.

Using \eqref{3.3} and \eqref{r1=r6} we get that the sectional curvatures $k_{ij}$ of the basic $2$-planes $\{x_{i},  x_{j}\}$ are equal to
\begin{equation}\label{kappa1}
  k=\lambda^{2}_{1}+\lambda^{2}_{2}+\nu^{2}_{2},
\end{equation}
i.e. $(G', g, Q)$  is conformally flat.

Therefore, we establish the truthfulness of the following
\begin{prop}
 In Case $(A)$ the following properties of $(G', g, Q)$ are valid:
\begin{itemize}
\item[1)]  the components of $F$, $\theta$ and $\theta^{*}$ are \eqref{F-ex1} and \eqref{theta-ex1};
\item[2)]  the nonzero components of $R$ and $\rho$ are \eqref{R-ex1} and \eqref{rho-ex1};
 \item[3)]  the manifold is an Einstein manifold and the scalar curvature is \eqref{tau-ex1};
\item[4)] the manifold  is of constant sectional curvature \eqref{kappa1}.
\end{itemize}
\end{prop}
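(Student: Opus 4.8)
The goal is to verify the four listed properties in Case~(A), where the Lie algebra is cut out by $\lambda_3=0$, $\nu_1=0$, $\nu_3=-\lambda_1$ together with the Jacobi conditions \eqref{jacobi}. First I would substitute these three relations into \eqref{jacobi} to see which of the equations survive and which structure constants remain free; this is where the reduction to the commutators \eqref{skobki-ex1} and the surviving parameters $\lambda_1,\lambda_2,\nu_2$ comes from (the third bracket becoming $\nu_2 x_1+\lambda_2 x_3$ after using \eqref{mu-theta-o}). The main bookkeeping obstacle is making sure that \eqref{jacobi} is genuinely satisfied — i.e. that Case~(A) is a legitimate solution and not an over-determination — and I expect one of the three Jacobi equations to vanish identically while the other two force no further constraint once $\lambda_3=\nu_1=0$, $\nu_3=-\lambda_1$ are imposed.

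For item~1), I would take the general formulas \eqref{F2-o}, \eqref{theta-o3} and simply specialize $\lambda_3\mapsto 0$, $\nu_1\mapsto 0$, $\nu_3\mapsto-\lambda_1$; every entry collapses to the expressions in \eqref{F-ex1} and \eqref{theta-ex1}. This is purely mechanical. For item~2), I would feed the reduced structure constants into \eqref{r1-o}: the three diagonal sectional components $R_{1212}$, $R_{1313}$, $R_{2323}$ should each reduce to $\lambda_1^2+\lambda_2^2+\nu_2^2$ (consistency with \eqref{R1=R2=R3}, which is automatically satisfied here), and the mixed components $R_{1223}$, $R_{1213}$, $R_{1332}$ should vanish. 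Then the Ricci components follow from \eqref{def-rho}: since $g_{ij}=\delta_{ij}$, $\rho_{ii}$ is a sum over the two sectional curvatures of planes through $x_i$, giving $-2(\lambda_1^2+\lambda_2^2+\nu_2^2)$, and the off-diagonal $\rho_{ij}$ vanish because the only surviving curvature components are the diagonal ones.

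For item~3), summing \eqref{rho-ex1} against $g^{ij}=\delta^{ij}$ gives \eqref{tau-ex1}, and comparing \eqref{rho-ex1} with \eqref{tau-ex1} and \eqref{g} yields $\rho=\tfrac{\tau}{3}g$, the Einstein condition. For item~4), I would apply \eqref{3.3}: for any basic $2$-plane $\{x_i,x_j\}$ one has $g(x_i,x_i)g(x_j,x_j)-g^2(x_i,x_j)=1$ by \eqref{g}, so $k(x_i,x_j)=R_{ijij}=\lambda_1^2+\lambda_2^2+\nu_2^2$ independently of the pair $\{i,j\}$; in dimension three this common value of all three basic sectional curvatures, combined with the fact that these are the only nonzero curvature components, gives constant sectional curvature \eqref{kappa1} and in particular conformal flatness. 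The only real ``hard part'' is the curvature computation in item~2): one must either recompute $R$ directly from \eqref{nabla} and \eqref{R} with the reduced $\nabla$, or carefully track the many terms of \eqref{r1-o} under the substitution; I would do the latter, checking as a sanity test that the three diagonal components indeed coincide (which they must, since Case~(A) was defined precisely to solve \eqref{R1=R2=R3}).
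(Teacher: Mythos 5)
Your plan is correct and is essentially identical to the paper's own argument: the paper likewise obtains the Case~(A) commutators by substituting $\lambda_{3}=\nu_{1}=0$, $\nu_{3}=-\lambda_{1}$ into \eqref{skobki-o2} and \eqref{jacobi}, then specializes \eqref{F2-o}, \eqref{theta-o3}, \eqref{r1-o} and \eqref{def-rho} to get \eqref{F-ex1}--\eqref{kappa1}, the Einstein relation $\rho=\tfrac{\tau}{3}g$, and the constant sectional curvature. The only minor imprecision is your expectation that one Jacobi equation survives as a nontrivial check; in fact all three equations of \eqref{jacobi} (and both of \eqref{R1=R2=R3}) vanish identically under the Case~(A) substitution, leaving $\lambda_{1},\lambda_{2},\nu_{2}$ free, which only strengthens your argument.
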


With similar calculations we obtain the following statements.

\begin{prop}
 In Case $(B)$ the Lie algebra is determined by the commutators \begin{equation*}
\begin{split}
  [x_{1}, x_{2}]&=\lambda_{1}x_{1}+\lambda_{2}x_{2}-(\lambda_{1}+\lambda_{2})x_{3},\\
    [x_{2}, x_{3}]&=-(\lambda_{1}+\lambda_{2})x_{1}+\lambda_{1}x_{2}+\lambda_{2}x_{3},\\
    [x_{1}, x_{3}]&=-\lambda_{2}x_{1}+(\lambda_{1}+\lambda_{2})x_{2}-\lambda_{1}x_{3}.\\
  \end{split}
\end{equation*}
In this case
\begin{itemize}
\item[1)] the components of $F$, $\theta$ and $\theta^{*}$ are
\begin{align*}\nonumber
F_{111} =F_{222}  = & F_{333} =2(\lambda_{1}-\lambda_{2}), \\
  F_{112} =F_{221}  = & F_{223}  =F_{313} =F_{332} = F_{113} =\frac{1}{2}(\lambda_{1}-\lambda_{2}), \\\nonumber
 F_{122}  = F_{123}= & F_{211} = F_{213} =F_{133}=F_{233} =\\\nonumber
  &=F_{311}= F_{312} =F_{322} =\lambda_{2}-\lambda_{1}, \\\nonumber
\theta_{1}=\theta_{2}=\theta_{3}=&3(\lambda_{1}-\lambda_{2}),\
\theta^{*}_{1}=\theta^{*}_{2}=\theta^{*}_{3}=\frac{3}{2}(\lambda_{2}-\lambda_{1});
\end{align*}
\item[2)]  the nonzero components of  $R$ and $\rho$ are
\begin{align*}
    R_{1212}= R_{2323}=R_{1313}=\frac{3}{4}(\lambda_{1}-\lambda_{2})^{2},\\
      \rho_{11}= \rho_{22}=\rho_{33}=-\frac{3}{2}(\lambda_{1}-\lambda_{2})^{2};
\end{align*}
 \item[3)]  the manifold is an Einstein manifold and the scalar curvature is \begin{equation*}
    \tau=-\frac{9}{2}(\lambda_{1}-\lambda_{2})^{2};
\end{equation*}
\item[4)] the manifold is of constant sectional curvature \begin{equation*}
  k=\frac{3}{4}(\lambda_{1}-\lambda_{2})^{2}.
\end{equation*}
\end{itemize}
\end{prop}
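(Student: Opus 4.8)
The plan is to repeat, for Case~(B), the computational pipeline already carried out for Case~(A). First I would substitute the Case~(B) constraints $\lambda_{3}=-\lambda_{1}-\lambda_{2}$, $\nu_{1}=-\lambda_{1}-\lambda_{2}$, $\nu_{2}=\lambda_{1}$, $\nu_{3}=\lambda_{2}$ into the general commutators \eqref{skobki-o2}, which immediately yields the displayed Lie bracket relations; it should also be checked that these values satisfy the Jacobi conditions \eqref{jacobi} and the curvature-invariance conditions \eqref{R1=R2=R3}, so that $(G',g,Q)$ is indeed a legitimate manifold of the considered type with sectional curvatures invariant under $Q$.

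Next I would compute the components of $F$, $\theta$, and $\theta^{*}$ by substituting the Case~(B) values into \eqref{F2-o} and \eqref{theta-o3}. Because in Case~(B) everything collapses to a single parameter combination $\lambda_{1}-\lambda_{2}$, the resulting expressions are highly symmetric — $\theta_{1}=\theta_{2}=\theta_{3}=3(\lambda_{1}-\lambda_{2})$ and $\theta^{*}_{1}=\theta^{*}_{2}=\theta^{*}_{3}=\tfrac{3}{2}(\lambda_{2}-\lambda_{1})$ — and the $F_{ijk}$ split into three groups as displayed. One can double-check internal consistency via the identity \eqref{F1-lemma} (equivalently \eqref{c1} specialized to \eqref{g}, \eqref{f}), which must hold since $(G',g,Q)$ is of the type of $(M,g,Q)$.

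For part~2) I would substitute the Case~(B) values into the curvature components \eqref{r1-o}; the off-diagonal terms $R_{1223}$, $R_{1213}$, $R_{1332}$ should vanish (consistent with the computation of the Ricci tensor), while $R_{1212}=R_{2323}=R_{1313}=\tfrac34(\lambda_1-\lambda_2)^2$ follows from \eqref{r1=r6} together with the explicit formula. Then $\rho$ is obtained from \eqref{def-rho}: since all basic sectional curvatures coincide and the cross terms vanish, $\rho_{11}=\rho_{22}=\rho_{33}=-2\cdot\tfrac34(\lambda_1-\lambda_2)^2=-\tfrac32(\lambda_1-\lambda_2)^2$ and $\tau=3\cdot(-\tfrac32(\lambda_1-\lambda_2)^2)=-\tfrac92(\lambda_1-\lambda_2)^2$. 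Parts~3) and~4) are then immediate: $\rho=\tfrac{\tau}{3}g$ gives the Einstein property, and \eqref{3.3} with \eqref{g} gives constant sectional curvature $k=\tfrac34(\lambda_1-\lambda_2)^2$ exactly as in the proof of the Case~(A) proposition.

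The main obstacle is purely computational bookkeeping rather than conceptual: one must carefully track the substitution of four simultaneous constraints into the rather lengthy formulas \eqref{r1-o} and \eqref{F2-o}, and in particular verify that the apparently generic off-diagonal curvature components $R_{1223}$, $R_{1213}$, $R_{1332}$ really do collapse so that the manifold is of constant curvature and not merely of "invariant sectional curvatures under $Q$". Since the final answer is stated to depend only on $\lambda_1-\lambda_2$, a useful sanity device at each stage is to set $\lambda_1=\lambda_2$ and confirm that all of $F$, $\theta$, $\theta^{*}$, $R$, $\rho$, $\tau$ vanish, i.e. that $(G',g,Q)$ degenerates to a flat manifold with $\tilde\nabla$-parallel structure; this catches sign errors quickly. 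Beyond that, the proof is a matter of "similar calculations" exactly parallel to Case~(A).
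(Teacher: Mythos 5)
Your proposal is correct and follows exactly the route the paper intends: the paper proves Case~(B) only by the remark ``with similar calculations,'' i.e.\ by substituting the Case~(B) constraints into the general formulas \eqref{skobki-o2}, \eqref{F2-o}, \eqref{theta-o3} and \eqref{r1-o} and then reading off $\rho$, $\tau$ and $k$ precisely as in Case~(A). I spot-checked the substitutions (including the vanishing of $R_{1223}$, $R_{1213}$, $R_{1332}$) and they confirm the stated values, so nothing further is needed.
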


\begin{prop}
 In Case $(C)$ the Lie algebra is determined by the commutators  \begin{equation*}
\begin{split}
  [x_{1}, x_{2}]&=\lambda_{1}x_{1}+\lambda_{2}x_{2}-(\lambda_{1}+\lambda_{2})x_{3},\\
  [x_{2}, x_{3}]&=\lambda_{1}x_{1}+\lambda_{2}x_{2}-(\lambda_{1}+\lambda_{2})x_{3},\\
  [x_{1}, x_{3}]&=-\lambda_{1}x_{1}-\lambda_{2}x_{2}+(\lambda_{1}+\lambda_{2})x_{3}.\\
  \end{split}
\end{equation*}
In this case
\begin{itemize}
\item[1)] the components of $F$, $\theta$ and $\theta^{*}$ are zero;
\item[2)]  the nonzero components of $R$ and $\rho$ are
\begin{equation*}
      R_{1212}= R_{2323}=R_{1313}=R_{1332}=R_{1223}= R_{1321}=2(\lambda^{2}_{1}+\lambda_{1}\lambda_{2}+\lambda^{2}_{2}),
\end{equation*}
\begin{equation*}
        \rho_{11}= \rho_{22}=\rho_{33}= \rho_{12}=\rho_{13}= \rho_{23}=-4(\lambda^{2}_{1}+\lambda_{1}\lambda_{2}+\lambda^{2}_{2});
\end{equation*}
 \item[3)]  the manifold is an Einstein manifold and the scalar curvature is \begin{equation*}
    \tau=-12(\lambda^{2}_{1}+\lambda_{1}\lambda_{2}+\lambda^{2}_{2});
\end{equation*}
\item[4)] the manifold is of constant sectional curvature \begin{equation*}
  k=2(\lambda^{2}_{1}+\lambda_{1}\lambda_{2}+\lambda^{2}_{2});
\end{equation*}
\item[5)] it is valid $[Qx_{i}, Qx_{j}]=[x_{i}, x_{j}]$, i.e. $Q$ is an abelian structure.
\end{itemize}
\end{prop}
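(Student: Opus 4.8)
The plan is to carry out for Case~(C) exactly the computational scheme that was used for Case~(A), since every ingredient is already in place: the preceding Proposition expresses $R$, $F$, $\theta$ and $\theta^{*}$ on $(G,g,Q)$ as explicit polynomials in the structure constants $\lambda_{i},\nu_{i}$, and the statement for Case~(C) should follow by specializing these to $\lambda_{3}=-\lambda_{1}-\lambda_{2}$, $\nu_{1}=\lambda_{1}$, $\nu_{2}=\lambda_{2}$, $\nu_{3}=-\lambda_{1}-\lambda_{2}$. First I would insert these values into \eqref{skobki-o2}: the third bracket $(\nu_{2}+\lambda_{3})x_{1}+(\nu_{3}+\lambda_{1})x_{2}+(\nu_{1}+\lambda_{2})x_{3}$ collapses to $-\lambda_{1}x_{1}-\lambda_{2}x_{2}+(\lambda_{1}+\lambda_{2})x_{3}$, which yields the three commutators in the statement; one should also check (a routine polynomial verification) that these values satisfy the Jacobi relations \eqref{jacobi} and the invariance conditions \eqref{R1=R2=R3}, so that $(G',g,Q)$ is genuinely a Lie group of the considered type with $Q$-invariant basic sectional curvatures. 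Item~(5) is then immediate: writing $v=\lambda_{1}x_{1}+\lambda_{2}x_{2}-(\lambda_{1}+\lambda_{2})x_{3}$ the brackets are $[x_{1},x_{2}]=v$, $[x_{2},x_{3}]=v$, $[x_{1},x_{3}]=-v$, and using \eqref{lie} one gets $[Qx_{1},Qx_{2}]=[x_{2},x_{3}]=v=[x_{1},x_{2}]$, $[Qx_{2},Qx_{3}]=[x_{3},x_{1}]=v=[x_{2},x_{3}]$, $[Qx_{1},Qx_{3}]=[x_{2},x_{1}]=-v=[x_{1},x_{3}]$, i.e. $Q$ is abelian.

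Next I would substitute the Case~(C) values into \eqref{theta-o3}: each of $\theta_{1},\theta_{2},\theta_{3},\theta^{*}_{1},\theta^{*}_{2},\theta^{*}_{3}$ is a linear combination of the structure constants that collapses to zero, so $\theta=\theta^{*}=0$, whence $F=0$ by the characteristic identity \eqref{c1} (this can also be read off directly from \eqref{F2-o}); this is item~(1). For item~(2) I would plug the same values into \eqref{r1-o}: after collecting terms, $R_{1212}$, $R_{1313}$, $R_{2323}$, $R_{1223}$ and $R_{1332}$ all reduce to $2(\lambda_{1}^{2}+\lambda_{1}\lambda_{2}+\lambda_{2}^{2})$, while $R_{1213}$ reduces to its negative, so that $R_{1321}=2(\lambda_{1}^{2}+\lambda_{1}\lambda_{2}+\lambda_{2}^{2})$ as recorded. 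The Ricci tensor $\rho$ is then obtained by contracting \eqref{r1-o} with $g^{ij}=\delta^{ij}$ as in \eqref{def-rho}, using the standard symmetries of $R$; this gives the stated components of $\rho$, and $\tau=g^{ij}\rho_{ij}=-12(\lambda_{1}^{2}+\lambda_{1}\lambda_{2}+\lambda_{2}^{2})$. From the explicit form of $\rho$ and $\tau$ one then reads off that $(G',g,Q)$ is Einstein, which is item~(3), and from \eqref{3.3} together with $g_{ii}g_{jj}-g_{ij}^{2}=1$ one obtains $k_{12}=k_{13}=k_{23}=2(\lambda_{1}^{2}+\lambda_{1}\lambda_{2}+\lambda_{2}^{2})$, which is item~(4).

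There is no conceptual difficulty in any of this: it is entirely the specialization of formulas established earlier. The only genuine labor is bookkeeping; the longest step is the curvature contraction producing $\rho$, where some care with the index symmetries of $R$ is needed in order to express the $\rho_{ij}$ through the six components listed in \eqref{r1-o}. Verifying beforehand that the Case~(C) parameters actually solve both \eqref{jacobi} and \eqref{R1=R2=R3} is the other routine but necessary check, and once the substitutions are made all the claimed properties follow.
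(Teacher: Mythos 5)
Your proposal is correct and follows exactly the route the paper intends: the paper gives no separate proof for Case~(C), only the remark ``with similar calculations,'' meaning precisely the specialization of \eqref{skobki-o2}, \eqref{jacobi}, \eqref{F2-o}, \eqref{theta-o3} and \eqref{r1-o} to $\lambda_{3}=\nu_{3}=-\lambda_{1}-\lambda_{2}$, $\nu_{1}=\lambda_{1}$, $\nu_{2}=\lambda_{2}$, followed by the contractions \eqref{def-rho} and the sectional-curvature formula \eqref{3.3}, which is what you carry out. Your explicit verification of item~(5) via $[Qx_{i},Qx_{j}]$ and your note that $R_{1213}$ becomes the negative of the common value (so $R_{1321}=-R_{1213}$ by the pair symmetries) are exactly the bookkeeping the paper leaves implicit.
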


%---------------------------------------------------------

% ------------------------------------------------------------------------------------------------------------------------

% ----------------------------------------------------------------

\begin{thebibliography}{D}

\bibitem{2}
 \textsc{G. Dzhelepov, I. Dokuzova, D. Razpopov},  \emph{On a three-dimensional Riemannian manifold with an additional structure},  Plovdiv Univ. Sci. Works -- Math., \textbf{38}(3)(2011),  17--27.

\bibitem{4}
\textsc{G. Dzhelepov, D. Razpopov, I. Dokuzova}, \emph{Almost
conformal transformation in a class of Riemannian manifolds}, In: Proc. of Anniv. Int. Conf. -- REMIA, Plovdiv (2010),  125--128.

\bibitem{AE}
\textsc{I. Dokuzova}, \emph{Almost Einstein manifolds with circulant structures}, J. Korean Math. Soc. \textbf{54}(5) (2017), 1441--1456.

 \bibitem{GH}
\textsc{A. Gray, L. Hervella},  \emph{The sixteen classes of almost Hermitian manifolds and their linear invariants}, Ann. Mat. Pura Appl. \textbf{123}(4) (1980),   35--58.

\bibitem{HM-DM} \textsc{H.~Manev, D.~Mekerov}, \emph{Lie groups as 3-dimensional almost contact B-metric
manifolds}, J. Geom.  \textbf{106}(2) (2015), 229--242.

\bibitem{8}
\textsc{A. Naveira},  \emph{A classification of Riemannian almost-product manifolds},  Rend. Mat. (7) \textbf{3} (1983), 577--592.

\bibitem{S-G}
\textsc{M.~Staikova, K.~Gribachev}, \emph{Canonical connections and their conformal invariants on Riemannian almost-product manifolds}, Serdica Math. J. \textbf{18}(3-4) (1992), 150--161.

\end{thebibliography}
\end{document}